\documentclass[a4paper, 12pt]{article}

\let\Ho\H %

\usepackage[utf8]{inputenc}
\usepackage[T1]{fontenc}
\usepackage[a4paper, margin=2.5cm]{geometry}
\usepackage{needspace}
\usepackage{libertine} 
\usepackage{inconsolata} 

\usepackage{amsmath, amsthm, amssymb}
\usepackage{graphicx}
\usepackage{enumerate}
\usepackage[shortlabels]{enumitem}
\setlength{\marginparwidth}{2cm}
\usepackage[textsize=small, textwidth=2cm, color=yellow]{todonotes}
\usepackage{thmtools,mathtools}
\usepackage{thm-restate}
\usepackage{authblk}

\definecolor{CornflowerBlue}{rgb}{0.39, 0.58, 0.93}
\definecolor{DarkGoldenrod}{rgb}{0.72, 0.53, 0.04}
\definecolor{BritishRacingGreen}{rgb}{0.0, 0.26, 0.15}
\definecolor{DarkMagenta}{rgb}{0.55, 0.0, 0.55}
\definecolor{AO}{rgb}{0.0, 0.5, 0.0}
\definecolor{BostonUniversityRed}{rgb}{0.8, 0.0, 0.0}
\definecolor{myRed}{rgb}{0.8, 0.0, 0.0}
\definecolor{DarkMidnightBlue}{rgb}{0.0, 0.2, 0.4}
\definecolor{DarkTangerine}{rgb}{1.0, 0.66, 0.07}
\definecolor{AppleGreen}{rgb}{0.55, 0.71, 0.0}
\definecolor{BrightUbe}{rgb}{0.82, 0.62, 0.91}
\definecolor{Amethyst}{rgb}{0.6, 0.4, 0.8}
\definecolor{DarkGray}{rgb}{0.52, 0.52, 0.51}
\definecolor{Gray}{rgb}{0.66, 0.66, 0.66}
\definecolor{BananaYellow}{rgb}{1.0, 0.88, 0.21}
\definecolor{Amber}{rgb}{1.0, 0.75, 0.0}
\definecolor{LightGray}{rgb}{0.83, 0.83, 0.83}
\definecolor{PrincetonOrange}{rgb}{1.0, 0.56, 0.0}
\definecolor{DeepCarrotOrange}{rgb}{0.91, 0.41, 0.17}
\definecolor{bordeaux}{RGB}{100,0,50}

\definecolor{cerise}{rgb}{0.93, 0.23, 0.51}

\usepackage{hyperref}

\hypersetup{
    colorlinks=true,
    linkcolor = bordeaux,
    citecolor = bordeaux,
    urlcolor = bordeaux,
    bookmarksopen=true,
    bookmarksnumbered,
    bookmarksopenlevel=2,
    bookmarksdepth=3
}

\usepackage{float}
\usepackage[normalem]{ulem}
\usepackage{xcolor}
\usepackage[ruled,vlined]{algorithm2e}

\usepackage{comment}
\usepackage{placeins}
\usepackage{pdfpages}

\usepackage{pgf}
\usepackage{pgffor}
\usepackage{xspace}
\usepackage{tikz}
\usepackage{tkz-graph,tkz-berge}
\usetikzlibrary{fit}
\usetikzlibrary{arrows}
\usetikzlibrary{patterns}
\usetikzlibrary{calc}
\usetikzlibrary{shapes}
\usetikzlibrary{positioning}
\usetikzlibrary{math}
\usetikzlibrary{backgrounds}

\renewenvironment{abstract}
{\small\vspace{-1em}
\begin{center}
\bfseries\abstractname\vspace{-.5em}\vspace{0pt}
\end{center}
\list{}{
\setlength{\leftmargin}{0.6in}%
\setlength{\rightmargin}{\leftmargin}}%
\item\relax}
{\endlist}

\declaretheorem[name=Lemma, numberwithin = section]{lemma}
\declaretheorem[name=Theorem, sibling=lemma]{theorem}

\declaretheorem[name=Definition, sibling=lemma]{definition}
\declaretheorem[name=Corollary, sibling=lemma]{corollary}
\declaretheorem[name=Conjecture, sibling=lemma]{conjecture}

\declaretheorem[name=Observation, sibling=lemma]{observation}

\declaretheorem[name=Scenario, sibling=lemma]{scenario}

\def\cqedsymbol{\ifmmode$\lrcorner$\else{\unskip\nobreak\hfil
\penalty50\hskip1em\null\nobreak\hfil$\lrcorner$
\parfillskip=0pt\finalhyphendemerits=0\endgraf}\fi}


\interfootnotelinepenalty=10000

\newcommand{\say}[1]{``#1''} 

 %
 %
 %
 %
\def\D{\mathcal{D}} 


 %

\def\dichi{\overrightarrow{\chi}} 

\def\stmt#1{\vspace{0cm}\begin{equation}\longbox{#1}\end{equation}}

\thickmuskip=5mu plus 1mu minus 2mu

\makeatletter
\newcommand{\leqnomode}{\tagsleft@true}

\newcommand{\reqnomode}{\tagsleft@false}
\makeatother
\def\longbox#1{\parbox{0.85\textwidth}{\textit{#1}}}

\definecolor{nblue}{rgb}{0.38, 0.51, 0.71} 
\definecolor{darkblue}{RGB}{17, 42, 60} 
\definecolor{nred}{RGB}{175, 49, 39} 

\definecolor{norange}{RGB}{217, 156, 55} 
\definecolor{ngreen}{RGB}{144, 169, 84} 
\definecolor{palegreen}{RGB}{197, 184, 104} 

\definecolor{nyellow}{RGB}{250, 199, 100} 
\definecolor{brokenwhite}{RGB}{218, 192, 166} 
\definecolor{brokengrey}{rgb}{0.77, 0.76, 0.82} 
\bibliographystyle{plainurl}

\usepackage[absolute]{textpos}
\usepackage{mathdots}
\usepackage{soul}
\definecolor{grun}{rgb}{0, 0.5, 0.5}
\definecolor{violet}{RGB}{177, 0.5, 255}

\newcommand{\ora}[1]{\overrightarrow{#1}}
\newcommand{\olra}[1]{\overleftrightarrow{#1}}

\newcommand{\pathminimizingclosedtournament}{\hyperref[def:closedtournament]{path-minimizing closed tournament}}

\newcommand{\forwardinduced}{\hyperlink{def:forwardinduced}{forward-induced }}

\title{Proving a directed analogue of the Gyárfás-Sumner conjecture for orientations of $P_4$ %
\thanks{\parbox[t]{0.8\linewidth}{TM, MP, and US 
 received funding from the European Research Council (ERC) under the European Union's Horizon 2020 research and innovation programme Grant Agreement 714704. LC was supported the Institute for Basic Science (IBS-R029-C1). AR was supported by the ANR project Digraphs (ANR-19- CE48-0013-01) and the Institute for Basic Science (IBS-R029-C1).}%
\ \ \parbox[t]{0.15\linewidth}{~\\[-3mm]\includegraphics[width=30px]{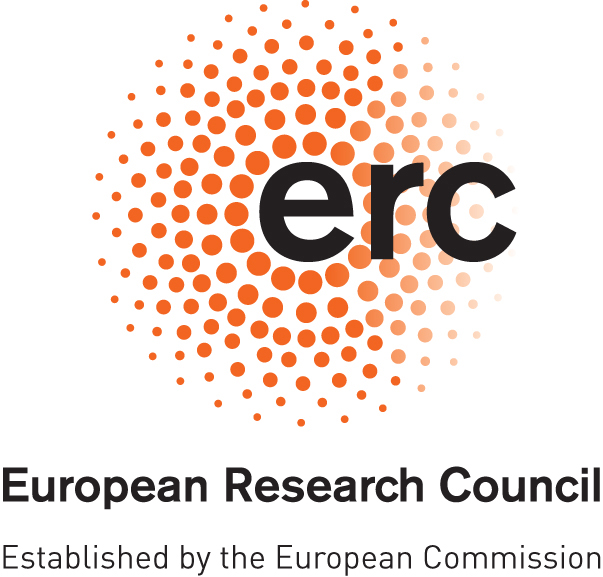}\\\includegraphics[width=30px]{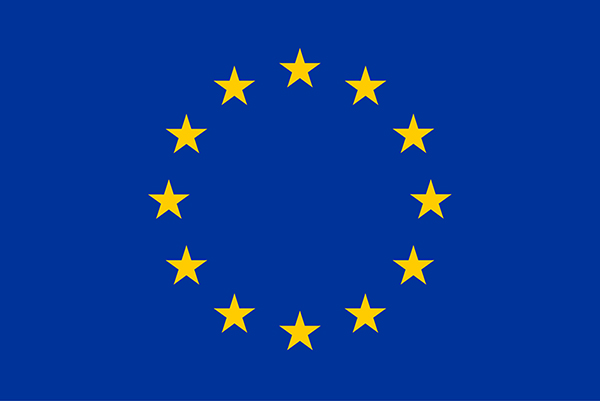}}
}}

\author[1]{Linda Cook}
\author[2]{Tomáš Masařík}
\author[2]{Marcin Pilipczuk}
\author[1,3,4]{\\Amadeus Reinald}
\author[2,5]{Uéverton~S.~Souza}
\affil[1]{Institute for Basic Science, Daejeon, Republic of Korea}
\affil[2]{Institute of Informatics, Faculty of Mathematics, Informatics and Mechanics, University of Warsaw, Warsaw, Poland}
\affil[3]{École Normale Supérieure de Lyon, Lyon, France}
\affil[4]{Université Côte d'Azur, Inria, CNRS, I3S, Sophia Antipolis, France}
\affil[5]{Instituto de Computação, Universidade Federal Fluminense, Niterói, Brasil}

\date{}

\begin{document}

\maketitle

\begin{abstract}
An oriented graph is a digraph that does not contain a directed cycle of length two.
An (oriented) graph $D$ is \emph{$H$-free} if $D$ does not contain $H$ as an induced sub(di)graph.
The Gyárfás-Sumner conjecture is a widely-open conjecture on simple graphs, which states that for any forest $F$, there is some function $f$ such that every $F$-free graph $G$ with clique number $\omega(G)$ has chromatic number at most $f(\omega(G))$.
Aboulker, Charbit, and Naserasr [Extension of Gyárfás-Sumner Conjecture to Digraphs; E-JC 2021] proposed an analogue of this conjecture to the dichromatic number of oriented graphs.
The \emph{dichromatic number} of a digraph $D$ is the minimum number of colors required to color the vertex set of $D$ so that no directed cycle in $D$ is monochromatic.

Aboulker, Charbit, and Naserasr's $\dichi$-boundedness conjecture states that for every oriented forest $F$, there is some function $f$ such that every $F$-free oriented graph $D$ has dichromatic number at most $f(\omega(D))$, where $\omega(D)$ is the size of a maximum clique in the graph underlying $D$.
In this paper, we perform the first step towards proving Aboulker, Charbit, and Naserasr's $\dichi$-boundedness conjecture by showing that it holds when $F$ is any orientation of a path on four vertices.
\end{abstract}

\section{Introduction}

In a simple graph, the size of a maximum clique gives a lower bound on its chromatic number.
But if a graph contains no large cliques, does it necessarily have small chromatic number?
This question has been answered in the negative.
In the mid-twentieth century, Mycielski \cite{Mycielski1955} and Zykov \cite{zykov1949} gave constructions for triangle-free graphs with arbitrarily large chromatic number.
Hence we may ask the following question instead: Given some fixed graph $H$, do graphs with a bounded clique number that do not contain $H$ \emph{as an induced subgraph} have bounded chromatic number?
In 1959, Erd\Ho{o}s showed that there exist graphs with arbitrarily high girth and arbitrarily high chromatic number \cite{erdosBook}.
Hence, the answer to the previous question is \say{no} whenever $H$ contains a cycle, and thus we need only consider the question when $H$ is a forest.
Around the 1980s, Gyárfás and Sumner independently conjectured \cite{gyarfasConjecture, sumnerConjecture} that for any forest $H$, all graphs with bounded clique number and no induced copy of $H$ have bounded chromatic number.
The conjecture has been proven for some specific classes of forests but remains largely open; see \cite{ScottSeymourSurveyChiBddd} for a survey of related results. 
This paper concerns an extension of the Gyárfás-Sumner conjecture to directed graphs proposed by Aboulker, Charbit, and Naserasr \cite{AlboukerGyarfasSumner4Digraphs2020}. We will state the Gyárfás-Sumner conjecture and its extension to directed graphs more formally after introducing some necessary terminology.

A directed graph, or \textit{digraph}, is a pair $D=(V,E)$ where $V$ is the vertex set and $E$ is a set of ordered pairs of vertices in $V$ called the arc set.
We call a digraph \emph{oriented} if it has no \emph{digon} (directed cycle of length two).
This paper will focus on finite, simple, oriented graphs.

For a digraph $D = (V, E)$ we define the \emph{underlying graph} of $D$ to be the graph $D^* = (V, E^*)$ where $E^*$ is the set obtained from $E$ by replacing each arc $e \in E$ by an undirected edge between the same two vertices.
We say two vertices in $D$ are \emph{adjacent} or \emph{neighbors} if they are adjacent in $D^*$.
If $(v,w)$ is an arc of $D$ we say say that $v$ is an \emph{in-neighbor} of $w$ and that $w$ is an \emph{out-neighbor} of $v$. 
We denote the set of neighbors of a vertex $v \in V(D)$ by $N(v)$ and we denote $N(v) \cup \{v\}$ by $N[v]$.
For a set of vertices $S \subseteq V(D)$ we let $N(S)$ and $N[S]$ denote the sets $\cup_{v \in S} N(v) \setminus S$ and $\cup_{v \in S} N[v]$.
We call $N(S)$ the \emph{neighborhood} of $S$ and $N[S]$ the \emph{closed neighborhood} of $S$.
For a subdigraph $H \subseteq D$ we let $N(H)$ denote the set $N(V(H))$.

We let $P_t$ denote the path on $t$ vertices. We say an oriented path is a \emph{directed path} if its vertices are $p_1, p_2, \dots, p_t$, in order, and its orientation is $p_1 \rightarrow p_2 \rightarrow \dots \rightarrow p_t$. We let $\ora{P_t}$ denote the directed path on $t$ vertices. We say a digraph $D$ is \emph{strongly connected} if for every $v, w \in V(D)$ there is a directed path starting at $v$ and ending at $w$. An induced subdigraph $H$ of a digraph $D$ is a \emph{strongly connected component} of $D$ if it is strongly connected and every induced subgraph $H'$ of $D$ such that $H \subseteq H'$ is not strongly connected. We call a strongly connected $H$ component a source (sink) component of $D$ if every arc between $V(H)$ and $V(D \setminus H)$ begins (ends) in $V(H)$. 

We call a digraph whose underlying graph is a clique a \emph{tournament}.
As we consider only oriented digraphs, this definition corresponds to the standard definition of a tournament in the literature.
Given a (di)graph $G$ and $S \subseteq V$, we denote the sub(di)graph of $G$ induced by $S$ as $G[S]$.
We say that a (di)graph $G$ \textit{contains} a (di)graph $H$ if $G$ contains $H$ as an induced sub(di)graph.
If $G$ does not contain a (di)graph $H$ we say that $G$ is \emph{$H$-free}.
If $G$ does not contain any of the (di)graphs $H_1, H_2, \dots, H_k$ we say $G$ is \emph{$(H_1, H_2, \dots, H_k)$-free}.
The \emph{clique number} and the \emph{chromatic number} of a digraph are the chromatic number and clique number of its underlying graph, respectively.
We denote the clique number and the chromatic number of a (di)graph $G$ by $\omega(G)$ and $\chi(G)$, respectively.
We say that a graph $H$ is \emph{$\chi$-bounding} if there exists a function $f$ with the property that every $H$-free graph $G$ satisfies $\chi(G) \leq f(\omega(G))$.
In this language, \cite{erdosBook} implies all $\chi$-bounding graphs are forests.
We are now ready to state the Gyárfás-Sumner conjecture more formally.

\begin{conjecture}[The Gyárfás-Sumner conjecture \cite{gyarfasConjecture, sumnerConjecture}]
\label{con:gs}
Every forest is $\chi$-bounding.
\end{conjecture}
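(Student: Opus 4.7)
I would attack the conjecture through the standard reduction-to-trees followed by an induction on the tree, the skeleton underlying essentially all known partial results. The first move is to observe that it suffices to prove the statement for a single tree $T$: a folklore disjoint-union lemma shows that if each tree $T_i$ in $F = T_1 \cup \cdots \cup T_r$ is $\chi$-bounding with bounding function $f_i$, then so is $F$, via a pigeonhole argument on the connected components of a sufficiently highly-coloured induced subgraph of an $F$-free graph.

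With $F$ reduced to a single tree $T$, I would induct on $|V(T)|$, with the base case $|V(T)| \le 1$ being trivial. Pick a leaf $\ell$ of $T$ with neighbour $p$ and set $T' = T - \ell$; by induction, $T'$ is $\chi$-bounded by some function $f'$. Given a $T$-free graph $G$ with $\omega(G) \le k$ and chromatic number far exceeding $f'(k)$, I would fix a vertex $v_0$ inside a suitably chosen high-chromatic induced subgraph and consider the BFS layers $L_0 = \{v_0\}, L_1, L_2, \ldots$ from $v_0$. A standard averaging argument extracts two consecutive layers $L_i, L_{i+1}$ whose union still has chromatic number exceeding $f'(k)$; applying the induction hypothesis inside $L_{i+1}$ yields an induced copy $T^\star$ of $T'$. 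The intended finishing move is to find a vertex $w \in L_i$ adjacent to the image of $p$ in $T^\star$ but non-adjacent to the remaining $|V(T')|-1$ vertices of $T^\star$, producing an induced copy of $T$ and contradicting $T$-freeness.

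\textbf{The main obstacle}---and the reason the conjecture has stood open for over forty years---is precisely the adjacency control in this last step. Generically, a vertex of $L_i$ adjacent to the image of $p$ will also be adjacent to many other vertices of $T^\star$, destroying inducedness; the inductive hypothesis, applied as a black box, does not give any handle on the shadow that $T^\star$ casts in $L_i$. Overcoming this appears to require either (i) strengthening the induction to a \emph{rooted} version that simultaneously prescribes the external-neighbourhood pattern of a designated vertex of the inductively produced $T'$, or (ii) extracting inside the two-layer graph a highly structured ``substrate'' (in the spirit of Scott and Seymour) on which the adjacency shadow of $T^\star$ is tame. Known resolutions for specific families---paths (Gyárfás), radius-two trees (Kierstead--Penrice), caterpillars, subdivided stars, brooms---each exploit a combinatorial feature of the tree in question that makes one of these refinements tractable, but no general mechanism is known, and this is where a genuinely new structural idea would be needed to complete the plan.
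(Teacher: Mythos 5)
The statement you were asked about is not a theorem of the paper at all: it is the Gyárfás--Sumner conjecture, which the paper merely states and cites as a long-standing open problem, proving only a directed analogue for orientations of $P_4$. So there is no proof in the paper against which your attempt can be measured, and your proposal, as you yourself acknowledge, is not a proof either --- it is the standard reduction-to-a-single-tree plus leaf-induction skeleton, which indeed underlies the known partial results (Gyárfás for paths, Kierstead--Penrice for radius-two trees, Scott--Seymour for subdivided brooms), but which breaks exactly where you say it does. The final step of finding a vertex in the previous BFS layer adjacent to the image of the attachment vertex $p$ of $T'$ but to nothing else in the embedded copy is the unresolved adjacency-control problem; applying the induction hypothesis as a black box inside a layer gives no leverage over how the embedded $T'$ attaches backwards, and no general strengthening of the induction (rooted or substrate-based) is currently known that works for arbitrary trees. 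You have therefore correctly diagnosed the gap rather than filled it: what you wrote is an accurate survey of why the conjecture is open, not a proof, and it should not be presented as one. If your goal was to engage with what this paper actually establishes, the relevant statement to attempt is Theorem~\ref{thm:main} (every orientation of $P_4$ is $\dichi$-bounding), whose proof proceeds by induction on the clique number via dipolar sets built from path-minimizing closed tournaments --- a mechanism quite different from the undirected BFS-layer scheme you sketched.
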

Today, the conjecture is only known to hold for restricted classes of forests.
For example, Gyárfás showed that it holds for paths \cite{Gyarfas1987} via a short and elegant proof.
Subsequently, the conjecture was proven for other classes of forests. For example, the following classes of trees have been proven to be $\chi$-bounding:
\begin{itemize}
    \item Trees of  radius two by Kierstead and Penrice in 1994 \cite{kiersteadPenrice1994},
    \item Trees that can be obtained from a tree of radius two by subdividing \textit{every} edge incident to the root exactly once by Kierstead and Zhu in 2004 \cite{kiersteadZhu}, and
    \item Trees that can be obtained from a tree of radius two by subdividing \textit{some of the} edges incident to the root exactly once by Scott and Seymour in 2020 \cite{scottNewBrooms2020}.
\end{itemize}
Note that the class of trees described by the third bullet contains the classes described in both the first and second bullet.
See the survey of Scott and Seymour \cite{ScottSeymourSurveyChiBddd} for an overview of the state of the conjecture from 2020.

How can the \hyperref[con:gs]{Gyárfás-Sumner conjecture} be adapted to the directed setting?
A first idea is to call an oriented graph $H$ \emph{$\chi$-bounding} if there exists a function $f$ with the property that every $H$-free oriented graph $D$ satisfies $\chi(D) \leq f(\omega(D))$.
Then, once again, by \cite{erdosBook}, all $\chi$-bounding oriented graphs are oriented forests.
Note that if an oriented graph $H$ is $\chi$-bounding, its underlying graph $H^*$ is also $\chi$-bounding.
However, the converse does not hold, as, for instance, $P_4$ is $\chi$-bounding, but there exist orientations of $P_4$ that are not $\chi$-bounding.
There are four different orientations of $P_4$, up to reversing the order of the vertices on the whole path: $$\rightarrow \rightarrow \rightarrow , \rightarrow \leftarrow \rightarrow , \rightarrow \leftarrow \leftarrow, \leftarrow \leftarrow \rightarrow$$
Only the last two oriented graphs in the list are $\chi$-bounding:

\begin{itemize}
  \item \hypertarget{def:p4}{Recall, we denote the oriented $P_4$ with orientation $\rightarrow \rightarrow \rightarrow$ by $\hyperlink{def:p4}{\ora{P_4}}$.} In 1991, Kierstead and Trotter \cite{kiersteadtrotter1991}, showed that $\hyperlink{def:p4}{\ora{P_4}}$ is not $\chi$-bounding. Their construction was inspired by Zykov's construction of triangle-free graphs with a high chromatic number \cite{zykov1949}, and builds $\hyperlink{def:p4}{\ora{P_4}}$-free oriented graphs with arbitrarily large chromatic number and no clique of size three.
  \item \hypertarget{def:a4}{Around 1990, Gyárfás pointed out that $\leftarrow \rightarrow \leftarrow$ is not $\chi$-bounding, as witnessed by an orientation of the shift graphs on pairs \cite{gyarfas}. We will denote the $P_4$ with orientation $\leftarrow \rightarrow \leftarrow$ by $\hyperlink{def:a4}{\ora{A_4}}$.}
  \item \hypertarget{def:q4}{Chudnovsky, Scott and Seymour \cite{chudnovsky2019orientations} showed that $\rightarrow \leftarrow \leftarrow$ and $\leftarrow \leftarrow \rightarrow$ are both $\chi$-bound\-ing in 2019. In the same article, the authors show that orientations of stars are also $\chi$-bounding (stars are the class of complete bipartite graphs $K_{1,t}$ for any $t \geq 1)$. We will denote  $\rightarrow \leftarrow \leftarrow$ and $\leftarrow \leftarrow \rightarrow$ by $\hyperlink{def:q4}{\ora{Q_4}}$ and $\hyperlink{def:q4}{\ora{Q_4}'}$, respectively.}
\end{itemize}

Our first attempt at adapting the \hyperref[con:gs]{Gyárfás-Sumner conjecture} to oriented graphs failed for oriented paths such as $\hyperlink{def:p4}{\ora{P_4}}$ and  $\hyperlink{def:a4}{\ora{A_4}}$.
Hence, we focus on a different approach proposed by Aboulker, Charbit, and Naserasr \cite{AlboukerGyarfasSumner4Digraphs2020} which uses a concept called \say{dichromatic number}.
Directed coloring, or dicoloring, is a weakening of coloring defined on digraphs and was proposed by Neumann-Lara and subsequently developed by Erd\Ho{o}s and Neumann-Lara \cite{erdosmentionsdichi,NeumannLaraIntroducesDiChi1982}.
A \emph{dicoloring} of a digraph $D$ is a partition of $V(D)$ into classes, or colors, such that each class induces an acyclic digraph (that is, there is no monochromatic directed cycle).
The \emph{dichromatic number} of $D$, denoted as $\dichi(D)$, is the minimum number of colors needed for a dicoloring of $D$.
Notice that every coloring of a directed graph $D$ is also a dicoloring, thus $\dichi(D) \leq \chi(D)$.
We say a class of digraphs $\mathcal{D}$ is \emph{$\dichi$-bounded} if there exists a function $f$ such that every $D \in \mathcal{D}$ satisfies $\dichi(D) \leq f(\omega(D))$ and we call such an $f$ a \emph{$\dichi$-binding function} for $\mathcal{D}$.
We say that a digraph $H$ is $\dichi$-\emph{bounding} if the class of $H$-free oriented graphs is $\dichi$-bounded.

We can now state Aboulker, Charbit, and Naserasr's dichromatic analogue to the  \hyperref[con:gs]{Gyárfás–Sumner conjecture} for digraphs.
For brevity, we will call this conjecture the \say{ACN $\dichi$-boundedness} conjecture in the remainder of this paper.
Note, the ACN $\dichi$-boundedness conjecture was originally published as Conjecture 4.4 in \cite{AlboukerGyarfasSumner4Digraphs2020}.

\begin{conjecture}[The ACN $\protect\dichi$-boundedness conjecture~\cite{AlboukerGyarfasSumner4Digraphs2020}]\label{con:AlboukerGyarfasSumner4Digraphs2020}
Every oriented forest is $\dichi$-bound\-ing.
\end{conjecture}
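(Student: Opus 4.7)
The plan is to proceed by structural induction on the oriented forest $F$, matching the stepwise progress made on the classical \hyperref[con:gs]{Gyárfás-Sumner conjecture}. A first routine reduction is to the case where $F$ is a single oriented tree. This reduction is standard in the $\chi$-bounded setting and should transfer to $\dichi$: if $F$ is a disjoint union of oriented trees $F_1, \dots, F_k$, one exploits a Ramsey-type coloring to partition the vertex set of a putative counterexample into blocks that force the simultaneous appearance of each $F_i$ as an induced subdigraph, so that bounds for each $F_i$-free class can be combined into a bound for the $F$-free class.

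Second, for a single oriented tree $T$, the plan is to attack the problem in increasing order of tree complexity, paralleling the hierarchy of Gyárfás, Kierstead-Penrice, Kierstead-Zhu, and Scott-Seymour for the undirected case. The base layer consists of oriented paths, where one should adapt Gyárfás's longest-induced-path argument: given a putative $T$-free oriented graph $D$ of large $\dichi$, take a longest induced oriented path $P$ of a prescribed orientation in $D$, and analyze the in- and out-neighborhoods of its endpoints; the maximality of $P$ should force structural constraints on these neighborhoods that let one extract a subdigraph of smaller $\dichi$ and reach a contradiction. One then climbs to oriented trees of radius $2$ using a directed variant of the Kierstead-Penrice level-partition argument built on BFS-like out- or in-arborescences, and eventually to subdivisions of stars or \emph{oriented brooms} in the style of Scott-Seymour.

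The principal obstacle is that the undirected Gyárfás-Sumner conjecture itself remains wide open beyond quite restricted classes of trees, so any proof of the full directed conjecture would, in particular, subsume \Cref{con:gs}. Beyond this, the directed setting introduces substantial additional case analysis stemming from the many inequivalent orientations of each tree. A further subtlety is that for some orientations, notably $\hyperlink{def:p4}{\ora{P_4}}$ and $\hyperlink{def:a4}{\ora{A_4}}$, there exist $F$-free oriented graphs with bounded clique number and arbitrarily high \emph{chromatic} number; this rules out any strategy that routes through bounding $\chi$ of an auxiliary subdigraph, and forces one to work intrinsically with the dichromatic number, for which fewer tools are available (one has no chromatic extremal theory for acyclic colorings comparable to standard $\chi$-bounded techniques). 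A natural plan is therefore to view the full conjecture as a long-term program, in which this paper implements the very first non-trivial case, namely $F$ an orientation of $P_4$, and to develop along the way tools, such as dichromatic analogues of Gyárfás-path decompositions and of leveled neighborhood partitions, that can be iterated to reach larger classes of oriented trees.
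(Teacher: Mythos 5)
There is nothing in the paper to compare your attempt against: the statement you are addressing is a \emph{conjecture} (the ACN $\dichi$-boundedness conjecture), which the paper explicitly leaves open and does not prove. The paper establishes only the special case in which the oriented forest is an orientation of $P_4$ (Theorem~\ref{thm:main}), and it states that the conjecture is unknown for every orientation of any tree on at least five vertices that is not a star --- in particular for every orientation of $P_5$. Your text is accordingly not a proof but a research programme, and each of its load-bearing steps is precisely an open problem. The most serious gap is your ``base layer'': you propose to adapt Gyárfás's longest-induced-path argument to oriented paths, but that adaptation is exactly what nobody knows how to do. Gyárfás's argument controls the \emph{chromatic} number of neighborhoods along an induced path, and the constructions of Kierstead--Trotter for $\ora{P_4}$ and of Gyárfás (shift graphs) for $\ora{A_4}$ show that $\chi$-boundedness genuinely fails for these orientations, so there is no chromatic surrogate to fall back on; no dichromatic replacement for the levelling/neighborhood step is known. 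Indeed, for the one case that has been settled ($P_4$), the paper needs a different mechanism altogether --- dipolar sets (Definition~\ref{def:niceset}, Lemma~\ref{lem:nice-set}) built around a path-minimizing closed tournament (Definition~\ref{def:closedtournament}) --- and the authors state in Section~\ref{sec:conclusion} that this construction relies heavily on $P_4$ having only four vertices and is not expected to extend.

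Two further assertions in your plan are unsupported. First, the reduction from oriented forests to single oriented trees via a ``Ramsey-type coloring'' is stated without an argument; the standard undirected reduction does not transfer verbatim, because dicoloring constraints (monochromatic directed cycles) interact with the block structure differently than proper-coloring constraints, and the paper gives no such reduction. Second, your claim that a proof of the directed conjecture would subsume Conjecture~\ref{con:gs} is not justified: the conjecture concerns oriented graphs only, and bounding $\dichi$ of oriented graphs does not bound $\chi$ of underlying graphs (every graph has an acyclic orientation, of dichromatic number one), so no implication follows without an additional encoding, which you do not supply. In short, the statement remains open, the paper proves only the $P_4$ case by a method quite different from the one you sketch, and your proposal leaves all of the genuinely hard steps unproved.
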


The converse of the \hyperref[con:AlboukerGyarfasSumner4Digraphs2020]{ACN $\dichi$-boundedness conjecture} holds; all $\dichi$-bounding digraphs must be oriented forests. Indeed, Harutyunyan and Mohar proved that there exist oriented graphs of arbitrarily large undirected girth and dichromatic number \cite{HARUTYUNYAN_Mohar_2012}. Oriented graphs of sufficiently large undirected girth (and no digon) forbid any fixed digraph that is not an oriented forest.
Hence, no digraph containing a digon or a cycle in its underlying graph is $\dichi$-bounding. Moreover, for any finite list of digraphs $D_1, D_2, \dots, D_k$, if the class of $(D_1, D_2, \dots, D_k)$-free oriented graphs is $\dichi$-bounded then one of $D_1$, $D_2, \dots, D_k$ must be a forest.
One might ask whether the situation changes when we forbid an infinite list of oriented graphs.
We list some results related to this:
\begin{itemize}
  \item In \cite{sophieandfriends}, Carbonero, Hompe, Moore, and Spirkl provided a construction for oriented graphs with clique number at most three, arbitrarily high dichromatic number, and no induced directed cycles of odd length at least 5. They use this construction to disprove a well-known stronger version of the  \hyperref[con:gs]{Gyárfás-Sumner conjecture} sometimes referred to as \say{Esperet's conjecture} (see also \cite{ScottSeymourSurveyChiBddd}).
  
  \item In \cite{AboulkerChordalDirectedGraphsAreNotChiBounded2022}, Aboulker, Bousquet, and de Verclos showed that the class of chordal oriented graphs, that is, oriented graphs forbidding induced directed cycles of length greater than three, is not $\dichi$-bounded, answering a question posed in \cite{sophieandfriends}.
  
  \item In \cite{CarboneroDirectedKholesnotChiBounded2022}, Carbonero, Hompe, Moore, and Spirkl extended the result of \cite{sophieandfriends} to $t$-chordal graphs. A digraph is \emph{$t$-chordal} if it does not contain an induced directed cycle of length other than $t$. In \cite{CarboneroDirectedKholesnotChiBounded2022} the authors showed that $t$-chordal graphs are not $\dichi$-bounded, but $t$-chordal $\ora{P_t}$-free graphs are $\dichi$-bounded.
\end{itemize}

Note that Conjecture~\ref{con:AlboukerGyarfasSumner4Digraphs2020} only considers \emph{oriented} graphs.
This is the only sensible case.
By the result of Harutyunyan and Mohar \cite{HARUTYUNYAN_Mohar_2012} if $F$ contains a digon, then the class of $F$-free \emph{oriented} graphs is not $\dichi$-bounded.
If $F$ contains no digons and at least one edge, then the class of $F$-free \emph{digraphs} is not $\dichi$-bounded;
Any  digraph obtained from a graph by replacing every edge with a digon does not contain any oriented graph with at least one edge as an induced subgraph. 
Hence by \cite{Mycielski1955, zykov1949}, for any choice of an oriented graph with at least one edge $F$, there exist $F$-free digraphs (with digons) that have arbitrarily high dichromatic number and do not contain a triangle in their underlying graph. 

The \hyperref[con:AlboukerGyarfasSumner4Digraphs2020]{ACN $\dichi$-boundedness conjecture} is still widely open.
It is not known whether the conjecture holds for any orientation of any tree $T$ on at least five vertices that is not a star.
In particular, it is not known whether the conjecture holds for oriented paths.
In contrast, Gyárfás showed that every path is $\chi$-bounding in the 1980's \cite{gyarfasConjecture,Gyarfas1987}.
We will introduce some terminology before discussing the status of the \hyperref[con:AlboukerGyarfasSumner4Digraphs2020]{ACN $\dichi$-boundedness conjecture} for oriented paths in more detail.
For $t \leq 3$, $P_t$ is $\dichi$-bounding.
(This can be proven by, for example, noting that for $t \geq 3$ the graph $P_t$ is a star and applying Chudnovsky, Scott, and Seymour's result \cite{chudnovsky2019orientations} that every orientation of a star is $\chi$-bounding and therefore also $\dichi$-bounding.)
However, for $t \geq 4$, the picture gets more complicated:
\begin{itemize}
    \item Let $T$ be any fixed orientation of $K_3$. In \cite{AlboukerGyarfasSumner4Digraphs2020}, Aboulker, Charbit and Naserasr showed that class of ($T$, \hyperlink{def:p4}{$\ora{P_4}$})-free oriented graphs have bounded dichromatic number. The authors also show that $\hyperlink{def:p4}{\ora{P_4}}$-free oriented graphs with clique number at most three have bounded dichromatic number.
    
    \item Let $\ora{K_t}$ denote the transitive tournament on $t$ vertices.
    In \cite{steiner2021}, Steiner showed that the class of ($\ora{K_3}$, \hyperlink{def:a4}{$\hyperlink{def:a4}{\ora{A_4}}$})-free oriented graphs has bounded dichromatic number.
  In the same paper Steiner asked whether the class of $(H,\ora{K_t})$-free oriented graphs has bounded dichromatic number for $t \geq 4$ and $H \in \{\hyperlink{def:p4}{\ora{P_4}}, \hyperlink{def:a4}{\ora{A_4}} \}$. We explain in the next subsection that our main result answers this question in the affirmative.
\end{itemize}

\subsection{Our contributions}
In this paper, we show that every orientation of $P_4$ is $\dichi$-bounding and thus the \hyperref[con:AlboukerGyarfasSumner4Digraphs2020]{ACN $\dichi$-boundedness conjecture} holds for all orientations of~$P_4$.
The \hyperref[con:AlboukerGyarfasSumner4Digraphs2020]{ACN $\dichi$-boundedness conjecture} is open for any orientation of $P_t$ for $t \geq 5$.
Our main novel result is that $\hyperlink{def:p4}{\ora{P_4}}$ and $\hyperlink{def:a4}{\ora{A_4}}$ are both $\dichi$-bounding.
Chudnovsky, Scott and Seymour showed that both \hyperlink{def:q4}{$\ora{Q_4}$} and \hyperlink{def:q4}{$\ora{Q_4}'$} are $\chi$-bounding and thus also $\dichi$-bounding in \cite{chudnovsky2019orientations}.
We include in a new proof that \hyperlink{def:q4}{$\ora{Q_4}$} and \hyperlink{def:q4}{$\ora{Q_4}'$} are both $\dichi$-bounding and improve the $\dichi$-binding function for the classes of \hyperlink{def:q4}{$\ora{Q_4}$}-free oriented graphs and \hyperlink{def:q4}{$\ora{Q_4}'$}-free oriented graphs.
To summarize, our main result is the following:

\begin{restatable}{theorem}{main}\label{thm:main}
Let $H$ be an oriented $P_4$.
Then, the class of $H$-free oriented graphs is $\dichi$-bounded.
In particular, for any $H$-free oriented graph $D$,
\[\dichi(D) \leq (\omega(D)+ 7)^{(\omega(D)+8.5)}.\]
\end{restatable}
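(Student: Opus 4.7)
My plan is to handle the four pairwise non-equivalent orientations of $P_4$ separately: the directed path $\ora{P_4}$, the ``alternating'' orientation $\ora{A_4}$, and the two ``skewed'' orientations $\ora{Q_4}$ and $\ora{Q_4}'$. For $\ora{Q_4}$ and $\ora{Q_4}'$, Chudnovsky, Scott, and Seymour already proved $\chi$-boundedness, which trivially implies $\dichi$-boundedness, but a streamlined direct argument can give the explicit bound claimed in the statement: both orientations have a ``consistent'' direction on two of their three arcs, which lets one partition $V(D)$ by out-reachability from a single vertex and bound the $\dichi$ of each part by the corresponding bound for $\omega-1$. The substantively new cases are $\ora{P_4}$ and $\ora{A_4}$.

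For each case I would induct on $\omega(D)$. The base case $\omega(D)\leq 1$ is trivial since $D$ has no arcs. For the inductive step, fix an $H$-free oriented graph $D$ with clique number $\omega$ and write $f$ for the $\dichi$-binding function whose value at $\omega-1$ is already controlled by induction. The core technique is a directed analogue of Gyárfás's levelling argument for undirected paths:
\begin{enumerate}
\item Choose a seed substructure inside $D$. For $\ora{P_4}$-free graphs a single vertex $v_0$ suffices; for $\ora{A_4}$-free graphs one needs a richer seed, likely a maximal tournament chosen so that short directed paths out of it behave well (a ``path-minimizing closed tournament'' in the language hinted at by the preamble).
\item Partition $V(D)$ into layers $L_0,L_1,L_2,\dots$ according to directed distance to/from the seed, where the exact notion of distance is tailored to $H$.
\item Exploit $H$-freeness to show that each layer inherits a strictly stronger forbidden induced structure. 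For $\ora{P_4}$-free graphs, if $v_0\in L_0$ then any induced $\ora{P_3}$ inside $N^+(v_0)$ would, together with $v_0$, form an induced $\ora{P_4}$, so $L_1$ is $\ora{P_3}$-free and in particular has $\dichi\leq 2$; a slightly more delicate argument handles $L_i$ for $i\geq 2$ and pins $\dichi(L_i)$ to $f(\omega-1)$ or a small constant. For $\ora{A_4}$-free graphs a similar dichotomy holds, but phrased in terms of the two ``arms'' of $\ora{A_4}$ and after fixing adjacency types to the seed tournament.
\item Combine the per-layer dicolorings. Because a dicoloring forbids only monochromatic directed cycles, colors can be reused across layers whenever the inter-layer arcs cannot close into a monochromatic cycle; $H$-freeness, applied to short ``returning'' directed walks, bounds the number of layers whose colors must be kept disjoint, giving the final $\dichi(D) \leq f(\omega)$.
\end{enumerate}

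The hard part will be the $\ora{A_4}$ case. The orientation $\ora{A_4}$ has two sources and two sinks and no consistent ``forward'' direction, so directed BFS from a vertex does not immediately constrain $N^+(v_0)$ or $N^-(v_0)$ the way it does in the $\ora{P_4}$ case. To recover usable structure I expect we need a tournament seed together with a partition of the outside vertices by their adjacency pattern to that seed, arguing that certain adjacency types are impossible because they would create an induced $\ora{A_4}$ with a cleverly chosen triple of seed vertices. The $\ora{P_4}$ case should fall out of a lighter version of the same machinery. Once both new cases are handled and combined with the reproofs of the $\ora{Q_4},\ora{Q_4}'$ cases, the closed-form bound $(\omega(D)+7)^{(\omega(D)+8.5)}$ comes from taking the maximum of the four case-specific recursions and unrolling the induction.
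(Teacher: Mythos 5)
There is a genuine gap here, and it sits exactly where the real difficulty of the theorem lies. First, your key structural claim in the $\ora{P_4}$ case is false: if $x \to y \to z$ is an induced $\ora{P_3}$ inside $N^+(v_0)$, then $v_0$ is adjacent to \emph{all three} of $x,y,z$, so $\{v_0,x,y,z\}$ does not induce a $P_4$ at all; nothing forces $N^+(v_0)$ (or any BFS layer) to be $\ora{P_3}$-free, and in fact Kierstead and Trotter's construction gives $\ora{P_4}$-free oriented graphs with $\omega \le 2$ and unbounded \emph{chromatic} number, so no argument that bounds each layer's chromatic-type complexity and then ``reuses colors across layers'' can succeed without a precise mechanism for killing monochromatic directed cycles that weave through many layers via back-arcs. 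This is the second gap: in a digraph, layering by directed distance from a seed does not control arcs going backwards (an arc may jump from $L_j$ to $L_1$ for any $j$), so your step 4 --- ``$H$-freeness, applied to short returning directed walks, bounds the number of layers whose colors must be kept disjoint'' --- is an assertion of the theorem's hard core rather than a proof of it, and you give no lemma that would deliver it for either $\ora{P_4}$ or $\ora{A_4}$.

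For comparison, the paper does not layer at all. It reduces the problem via dipolar sets: by a lemma of Aboulker, Bousquet, and de Verclos, it suffices to exhibit, in every induced subgraph, a dipolar set of dichromatic number bounded in terms of $\omega$ and the inductive bound $\gamma$ for clique number $\omega-1$. The dipolar set is $N[C \cup X]$, where $C = K \cup V(P)$ is a \emph{path-minimizing closed tournament} (a maximum tournament $K$ made strongly connected by a shortest possible directed path $P$ from its sink component back to its source component) and $X$ is the strong neighborhood of $C$; strong connectivity of $C$ is what forces any vertex with both an in- and an out-neighbor outside to create the forbidden $P_4$. The remaining work --- and the bulk of the paper --- is bounding $\dichi(N(P)\setminus N[K])$ even though $P$ may be arbitrarily long, which is done through the partitions of $N(P)$ by first and last attachment, forbidden-arc observations between attachment classes, and (in the $\ora{P_4}$ and $\ora{A_4}$ cases) shortcut arguments exploiting path-minimality. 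None of this machinery, beyond the borrowed phrase ``path-minimizing closed tournament,'' is reconstructed or replaced in your proposal, so as it stands the proposal does not constitute a proof.
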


Our result also answers the question of \cite{steiner2021} in the affirmative, that is, for $H \in \{\hyperlink{def:p4}{\ora{P_4}}, \hyperlink{def:a4}{\ora{A_4}} \}$ and any $k \geq 4$ the class of $H$-free oriented graphs not containing a transitive tournament of order $k$ has bounded dichromatic number.
Indeed, any tournament of order $2^{k-1}$ must contain a transitive tournament of order $k$. 
Thus, forbidding a given transitive tournament forbids any large enough tournament.
Note that there is no analogous result for non-transitive tournaments since all sub-tournaments of a transitive tournament are transitive.
The conjectures raised in \cite{AlboukerGyarfasSumner4Digraphs2020} are aimed at characterizing \emph{heroic} sets, that is, sets $\mathcal{F}$ such that digraphs (allowing digons) forbidding all elements of $\mathcal{F}$ have bounded dichromatic number.
If we ignore the degenerate cases where heroic sets include the empty graph or the graph consisting of a single vertex, there are no heroic sets of size one and the only heroic set of size two consists of an arc and a digon.
Therefore, heroic sets of size three are the smallest interesting case.
Hence, our work in this paper can be seen as a continuation of the investigation of \cite{AlboukerGyarfasSumner4Digraphs2020} into heroic sets of order three containing a digon.
In the language of heroic sets, if the class of all $H$-free oriented graphs is $\dichi$-bounded, then the set consisting of $H$, a digon, and a transitive tournament is a heroic set.
Then, Theorem~\ref{thm:main} can be restated by saying that the heroic sets of the form $\{\olra{K_2},H,K\}$, where $\olra{K_2}$ denotes a digon, $H$ is an orientation of $P_4$, are exactly those where $K$ is a transitive tournament.

\paragraph{Structure of the paper and proof overview.}
Let $H$ be any orientation of $P_4$.
We prove Theorem~\ref{thm:main} by induction on the clique number.
We fix an integer $\omega(D) \geq 2$.
We define a function $f$ and assume that $H$-free oriented graphs with clique number $\omega'$ where $1 \leq \omega' < \omega$ have dichromatic number at most $f(\omega')$.
We then consider an oriented graph with clique number $\omega$ and show that $D$ can be dicolored using at most $f(\omega)$ colors.

Our strategy to bound $\dichi(D)$ crucially relies on a tool we call \emph{dipolar sets} which were introduced by the name \say{nice sets} in \cite{AlboukerGyarfasSumner4Digraphs2020}.
Dipolar sets have the following useful property \cite{AlboukerGyarfasSumner4Digraphs2020}: In order to bound the dichromatic number of a class of oriented graphs closed under taking induced subgraphs, it suffices to exhibit a dipolar set of bounded dichromatic number for each of the members in the class.
We give a few preliminary observations as well as an introduction to dipolar sets in Section~\ref{sec:preliminaries}.

In Section~\ref{sec:nicesets}, we show how to construct a dipolar set for any $H$-free oriented graph $D$ of clique number $\omega$.
Our goal is to bound the dichromatic number of this set.
The backbone of our construction is an object we call a closed tournament.
\begin{definition}[\textcolor{bordeaux}{path-minimizing closed tournament}]
\label{def:closedtournament}
We say $K$ and $P$ \emph{form a closed tournament} $C = K \cup V(P)$
if $K$ is a tournament of maximum order and $P$ is a directed path from a source component to a sink component of the directed graph induced by $K$.

We say $K$ and $P$ form \emph{path-minimizing closed tournament} if $|P|$ is minimized amongst all choices of $K, P$ that form a closed tournament.
\end{definition}

It follows from the definition of closed tournament that the graph induced by a closed tournament is strongly connected and that every strongly connected oriented graph has a \hyperref[def:closedtournament]{path-minimizing} closed tournament.
We will define a set $S$ consisting of the closed neighborhood of a \hyperref[def:closedtournament]{path-minimizing} closed tournament $C$ and a subset of the second neighbors of $C$.
We will show that if $D$ is $H$-free, then $S$ is a dipolar set.
This proof will rely heavily on the fact that $C$ is strongly connected.

The strong connectivity of $C$ is a powerful property in showing that $S$ is a dipolar set.
However, ensuring $C$ is strongly connected by adding $P$ to $K$ makes it harder to bound the dichromatic number of $N(C)$.
We explain in Section~\ref{sec:preliminaries} that we can easily bound the dichromatic number of the first neighborhood of any bounded cardinality set.
Unfortunately, we have no control over the cardinality of $P$ in a path-minimum closed tournament.
In fact, $P$, and thus $C$, might be arbitrarily large with respect to $\omega$.
This significantly increases the difficulty of the task of bounding the dichromatic number of $N(C)$.
Fortunately, since $D$ is $H$-free and we may choose $C$ to be a \hyperref[def:closedtournament]{\emph{path-minimizing} closed tournament}, there are a lot of restrictions on what arcs may exist between vertices of $N(C)$. Ultimately able to exploit these restrictions to bound the dichromatic number of $N(C)$. 

Interestingly, we can define $S$ and prove that it is a dipolar set in the same way for each possible choice of an oriented $P_4$. We describe our construction of a dipolar set $S$ in Section~\ref{sec:nicesets}.
However, we used different (but similar) proofs to show that $S$ has bounded dichromatic number for $H = \hyperlink{def:p4}{\ora{P_4}}$, $\hyperlink{def:a4}{\ora{A_4}},$ and $\hyperlink{def:q4}{\ora{Q_4}}$. The proof that $S$ has bounded dichromatic number when $H$ is $\hyperlink{def:q4}{\ora{Q_4}}$ implies the result when $H$ is $\hyperlink{def:q4}{\ora{Q_4}'}$.

In Section~\ref{sec:common}, we bound the dichromatic number of $C$, the vertices of $S$ in the second neighborhood of $C$, $N(K)$ for $H$-free graphs where $H$ is an arbitrary choice of an orientated $P_4$.
In Section~\ref{sec:firstnbd}, we bound the dichromatic number of the vertices in $S$ not handled in Section~\ref{sec:common}.
These remaining vertices are the set $N(P) \setminus N[K]$.
Here we use separate (but similar) proofs for $H = \hyperlink{def:p4}{\ora{P_4}}, \hyperlink{def:a4}{\ora{A_4}}, \hyperlink{def:q4}{\ora{Q_4}}$.
In Section~\ref{sec:gettingthebound}, we put the pieces together to obtain our main result that any orientation of $P_4$ is $\dichi$-bounding.
We discuss some related open questions in Section~\ref{sec:conclusion}.

\section{Preliminaries}\label{sec:preliminaries}

In this section, we lay the groundwork for our proof by making a few observations useful in later sections and introducing dipolar sets.
In the rest of the paper, we will only consider strongly connected oriented graphs since the dichromatic number of an oriented graph is equal to the maximum dichromatic number of one of its strongly connected components. In particular, we will work with the following assumptions:

\begin{scenario}[Inductive Hypothesis]
\label{inductivehypothesis}
Let $H$ be an oriented $P_4$ and let $\omega > 1$ be an integer.
We let $\gamma$ be the maximum of $\dichi(D')$ over every $H$-free oriented graph $D'$ satisfying $\omega(D') < \omega$.
We assume $\gamma$ is finite.
We let $D$ be an $H$-free oriented graph with clique number $\omega$ and assume $D$ is strongly connected. 
\end{scenario}

We will aim to bound the $\dichi(D)$ in terms of $\gamma$ and $\omega$.
We begin with some easy observations about the dichromatic number of the neighborhood of any sets of vertices in $D$. 
For any vertex $v \in V(D)$, by definition $\omega(D[N(v)]) \leq \omega -1$ as otherwise $D$ would contain a tournament of size greater than $\omega$.
Hence, for any $v \in V(D)$, $\dichi(N(v)) \leq \gamma$.
This can be directly extended to bounding the dichromatic number of the neighborhood of a set of a given size as follows:

\begin{observation}\label{obs:neighbourhoodtrivi}
  Let $D$ be an oriented graph and let $\gamma$ be the maximum value of $\dichi(N(v))$ for any $v \in V(D)$.
  Then every $X\subseteq V(D)$ satisfies:
  \[
    \dichi(N(X)) \leq \dichi\left(\bigcup_{x \in X} N(x)\right) \leq |X|\cdot  \gamma.
\]
\end{observation}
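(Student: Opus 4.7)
The plan is to prove the two inequalities separately, both by elementary arguments; in fact the statement is essentially a bookkeeping observation about how dichromatic number behaves under induced subgraphs and unions.

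For the first inequality, I will use that the dichromatic number is monotone under induced subgraphs: if $D[U] \subseteq D[U']$ as induced subdigraphs, then any dicoloring of $D[U']$ restricts to a dicoloring of $D[U]$, since the restriction only removes vertices and can only destroy, never create, monochromatic directed cycles. So it suffices to observe the set-theoretic inclusion $N(X) \subseteq \bigcup_{x \in X} N(x)$, which is immediate from the definitions (the left-hand side equals $\bigcup_{x \in X} N(x) \setminus X$).

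For the second inequality, I will construct an explicit dicoloring of $D\bigl[\bigcup_{x \in X} N(x)\bigr]$ using $|X| \cdot \gamma$ colors. Enumerate $X = \{x_1, \dots, x_k\}$ arbitrarily and partition a palette of $|X| \cdot \gamma$ colors into blocks $B_1, \dots, B_k$, each of size $\gamma$. By the definition of $\gamma$, each $D[N(x_i)]$ admits a dicoloring $c_i \colon N(x_i) \to B_i$. Then for each $v \in \bigcup_{x \in X} N(x)$, let $i(v)$ be the smallest index with $v \in N(x_{i(v)})$ and set $c(v) := c_{i(v)}(v)$. The blocks $B_1, \dots, B_k$ are disjoint, so any monochromatic directed cycle under $c$ uses colors from a single block $B_i$, and hence consists entirely of vertices with $i(v) = i$; such vertices lie in $N(x_i)$ and receive their color from $c_i$, contradicting the fact that $c_i$ is a dicoloring of $D[N(x_i)]$.

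There is no real obstacle here: the only thing to be careful about is that the two operations -- taking a dicoloring of an induced subgraph, and combining dicolorings using disjoint palettes -- both preserve the property of being a dicoloring, which is why the simple upper bound goes through. The observation is framed this way because later arguments in the paper will apply it to sets $X$ with $|X|$ controlled by $\omega$, so the $|X| \cdot \gamma$ bound is precisely what is needed to drive the induction on the clique number.
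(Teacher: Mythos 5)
Your proof is correct and matches the argument the paper implicitly has in mind: the paper states this as an observation without proof, relying on exactly the monotonicity of $\dichi$ under induced subgraphs together with the union bound via disjoint $\gamma$-color palettes, one per vertex of $X$. Nothing is missing; your careful handling of the "smallest index" assignment and of why a monochromatic cycle stays within one block is just the routine verification the paper omits.
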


We now formally define dipolar sets, one of the main tools used in this paper.
Note, dipolar sets were first introduced in \cite{AboulkerChordalDirectedGraphsAreNotChiBounded2022} as \say{nice sets}.

\begin{definition}[\textcolor{bordeaux}{dipolar set}]
\label{def:niceset}
A \emph{dipolar set} of an oriented graph $D$ is a nonempty subset $S \subseteq V(D)$ that can be partitioned into $S^+, S^-$ such that no vertex in $S^+$ has an out-neighbor in $V(D \setminus S)$ and no vertex in $S^-$ has an in-neighbor in $V(D \setminus S)$.
\end{definition}

We will use the following lemma from \cite{AboulkerChordalDirectedGraphsAreNotChiBounded2022} which reduces the problem of bounding the dichromatic number of $D$ to bounding the dichromatic number of a dipolar set in every induced oriented subgraph of $D$.
\begin{lemma}[Lemma 17 in \cite{AboulkerChordalDirectedGraphsAreNotChiBounded2022}]\label{lem:nice-set}
Let $\D$ be a family of oriented graphs closed under taking induced subgraphs.
Suppose there exists a constant $c$ such that every $D \in \D$ has a dipolar set $S$ with $\dichi(S) \leq c$.
Then every $D \in \D$ satisfies $\dichi(D) \leq 2c$. 
\end{lemma}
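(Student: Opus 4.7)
The plan is to proceed by induction on $|V(D)|$ over the class $\D$. The base case where $D$ is a single vertex is trivial since $\dichi(D) = 1 \leq 2c$. For the inductive step, fix $D \in \D$ and, using the hypothesis, let $S$ be a dipolar set of $D$ with $\dichi(D[S]) \leq c$, and fix the partition $S = S^+ \cup S^-$ from Definition~\ref{def:niceset}: no vertex of $S^+$ has an out-neighbor outside $S$, and no vertex of $S^-$ has an in-neighbor outside $S$.

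Let $D' = D[V(D) \setminus S]$. Since $\D$ is closed under taking induced subgraphs, $D' \in \D$, and since $|V(D')| < |V(D)|$ the inductive hypothesis yields a dicoloring $\varphi'$ of $D'$ using colors from $\{1, \dots, 2c\}$. The key trick, which drives the factor $2$ loss, is to color $S^+$ and $S^-$ from \emph{disjoint} palettes: because $D[S^+]$ and $D[S^-]$ are induced subdigraphs of $D[S]$, both have dichromatic number at most $c$, so we may fix a dicoloring $\varphi^+$ of $D[S^+]$ using $\{1, \dots, c\}$ and a dicoloring $\varphi^-$ of $D[S^-]$ using $\{c+1, \dots, 2c\}$. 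Concatenating $\varphi', \varphi^+, \varphi^-$ yields a coloring $\varphi$ of $V(D)$ with $2c$ colors, and it remains to verify that $\varphi$ is a dicoloring.

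Suppose for contradiction there is a monochromatic directed cycle $C$ under $\varphi$. If $V(C) \subseteq V(D')$, $V(C) \subseteq S^+$, or $V(C) \subseteq S^-$, we contradict $\varphi'$, $\varphi^+$, or $\varphi^-$ respectively. Since the palettes of $\varphi^+$ and $\varphi^-$ are disjoint, a monochromatic $C$ cannot use vertices from both $S^+$ and $S^-$ without also using vertices of $D'$. So $C$ must cross between $S$ and $D'$ at least twice. But by the defining property of a dipolar set, every arc from $D'$ into $S$ enters at a vertex of $S^+$ and every arc from $S$ into $D'$ leaves from a vertex of $S^-$. Hence along $C$, each maximal subpath inside $S$ begins in $S^+$ and ends in $S^-$, forcing $C$ to contain both an $S^+$-vertex and an $S^-$-vertex, which again contradicts monochromaticity.

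The only creative step is the palette-splitting of $S^+$ versus $S^-$; once that is in place, the dipolar interface condition does all the work, so I do not anticipate a genuine obstacle. One minor thing to be careful about is to state the induction in terms of the whole class $\D$ rather than a single digraph, so that the induced-subgraph-closure hypothesis is actually used when we invoke the inductive bound on $D'$.
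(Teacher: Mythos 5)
Your proof is correct: the induction on $|V(D)|$, the disjoint palettes for $S^+$ and $S^-$, and the observation that any directed cycle leaving $V(D)\setminus S$ must enter $S$ at a vertex of $S^+$ and exit from a vertex of $S^-$ (which is impossible monochromatically, and impossible outright if entry and exit coincide) together give $\dichi(D)\leq 2c$. Note that the paper does not prove this lemma itself but cites it as Lemma 17 of \cite{AboulkerChordalDirectedGraphsAreNotChiBounded2022}; your argument is essentially the standard one used there, so there is nothing further to reconcile.
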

 
\section{Building a dipolar set}\label{sec:nicesets}
In this section we give a construction for a \hyperref[def:niceset]{dipolar} set in an $H$-free oriented graph $D$ where $H$ is a oriented $P_4$.
We will then show that the \hyperref[def:niceset]{dipolar} set we construct has bounded dichromatic number if $D$ satisfies the properties given in Scenario~\ref{inductivehypothesis}.

\subsection{Closed Tournaments}

The simplest case for our construction is when $D$ contains a \emph{strongly connected} tournament $J$ of order $\omega(D)$.
Then, we can build a \hyperref[def:niceset]{dipolar} set consisting of the union of $J$ and a subset of vertices at distance at most two from $K$.

Let $K$ be a tournament of order $\omega(D)$ contained in $D$.
By definition every vertex $v \in N(K)$ has a non-neighbor in $K$.
Hence, the graph underlying $D[K \cup \{v\}]$ contains an induced $P_3$.
Now, suppose $K$ is strongly connected.
Then we get an even more powerful property: 
Since $K$ is strongly connected there is both an arc from $K \setminus N(v)$ to $N(v) \cap K$ and to $K \setminus N(v)$ from $N(v) \cap K$.
This means that $D[K \cup \{v \}]$ contains an induced $P_3$ starting at $v$ whose last edge is oriented as $\rightarrow$ and an induced $P_3$ starting at $v$ whose last edge is oriented as $\leftarrow$.
This property will give us more power to build specific induced orientations of $P_3$ in $N[K]$.
In particular, this restricts the way vertices at distance at most two interact with the rest of the graph and allows us to choose a \hyperref[def:niceset]{dipolar} set.

To overcome the fact that $D$ may not contain a strongly connected tournament of order $\omega(D)$, we use closed tournaments.
By definition of closed tournament every strongly connected oriented graph has a path-minimizing closed tournament. 
We will base our construction of a \hyperref[def:niceset]{dipolar} set on some path-minimizing tournament in order to gain some additional structure that we can use to bound the dichromatic number of our \hyperref[def:niceset]{dipolar} set.
In the next subsection we formally give the definition of our \hyperref[def:niceset]{dipolar} set.

\subsection{Extending a closed tournament into a dipolar set}
In order to build a \hyperref[def:niceset]{dipolar} set from a closed tournament, we need to make some distinctions between different types of neighbors of a set of vertices.
\hypertarget{def:strongnbd}{For a set of vertices $A$ and $v \in N(A)$ we say $v$ is a \emph{strong neighbor} of $A$ if $v$ has both an in-neighbor and an out-neighbor in $A$. Then, the \textcolor{bordeaux}{\emph{strong neighborhood}} of $A$ is the set of strong neighbors of $A$.}

Given a closed tournament $C$, we let $X$ denote the set of strong neighbors of $C$.
The following lemma proves that $N[C \cup X]$ is a \hyperref[def:niceset]{dipolar} set.
\begin{lemma}\label{lem:buildinganiceset}
Let $H$ be an orientation of $P_4$ and $D$ be an $H$-free oriented graph.
Let $C$ be a closed tournament in $D$ and let $X$ denote the \hyperlink{def:strongnbd}{strong neighborhood} of $C$.  
Then $N[C \cup X]$ is a \hyperref[def:niceset]{dipolar} set.
\end{lemma}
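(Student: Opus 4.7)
The plan is to exhibit a partition $S = S^+ \cup S^-$ of $S = N[C \cup X]$ witnessing the dipolar property directly, based on the local neighborhood structure of each vertex. Vertices in $C \cup X$ are trivial: all their neighbors lie in $N[C \cup X] = S$ by definition, so we may place them in $S^+$ without constraint. The real work lies in placing each $v \in N(C \cup X) \setminus (C \cup X)$ so that no out-arc (if $v \in S^+$) or in-arc (if $v \in S^-$) from $v$ escapes $S$, using the $H$-free assumption.

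For $v \in N(C) \setminus (C \cup X)$, $v$ is not a strong neighbor of $C$, so all arcs between $v$ and $C$ go in a single direction; by arc-reversal symmetry I may assume $v$ has only in-neighbors in $C$. Since $|K| = \omega$ and any clique containing $v$ has size at most $\omega$, $v$ has a non-neighbor in $K \subseteq C$. Strong connectivity of $C$ then yields an arc $c_1 \to c_2$ with $c_1 \notin N(v), c_2 \in N(v)$ and an arc $c_3 \to c_4$ with $c_3 \in N(v), c_4 \notin N(v)$, producing induced $P_3$'s $c_1 \to c_2 \to v$ and $v \leftarrow c_3 \to c_4$ inside $D[C \cup \{v\}]$. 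Any hypothetical $u \in N(v) \setminus S$ is non-adjacent to every vertex of $C \cup X$, so appending $u$ to either $P_3$ gives an induced $P_4$ in $D$. A direct enumeration shows that the arc $v \to u$ extends the two $P_3$'s to induced copies of $\ora{P_4}$ and $\ora{Q_4}'$, while $u \to v$ extends them to $\ora{Q_4}$ and $\ora{A_4}$. Thus, whichever orientation $H$ takes, exactly one $v$--$u$ arc direction is forbidden by $H$-freeness, fixing whether $v$ lies in $S^+$ or $S^-$.

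The case $v \in N(X) \setminus N[C]$ is handled analogously. I pick any $x \in X \cap N(v)$ and use arcs $c^- \to x \to c^+$ with $c^-, c^+ \in C$, guaranteed by $x \in X$; the two are distinct (no digons) and both non-adjacent to $v$ (since $v \notin N(C)$). If $v \to x$, the induced $P_3$'s $v \to x \to c^+$ and $c^- \to x \leftarrow v$ play exactly the same role as in the previous paragraph, and a hypothetical $u \in N(v) \setminus S$ is again non-adjacent to $c^{\pm}$ and $x$, so each $P_3$ extends to an induced $P_4$ and the four orientations appear across the two directions of the $v$--$u$ arc. The case $x \to v$ is symmetric. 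If $v$ has both in-neighbors and out-neighbors in $X$, both directions are forbidden simultaneously and $v$ can be placed on either side; otherwise the placement is dictated by $H$ and by the direction of $v$'s arcs to $X$ via a consistent convention.

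The main obstacle is the bookkeeping of the case analysis itself: four orientations of $H$, against two neighborhood types for $v \in N(C)$ and two arc directions for $v$ into $X$ in the other case, each requiring a small induced $P_4$ to be exhibited. What makes every branch go through uniformly is the combination of strong connectivity of $C$ with the clique-number bound: together they always supply ``turn arcs'' between $N(v) \cap C$ and $C \setminus N(v)$, which in turn yield induced $P_3$'s at $v$ of both non-trivial orientations with middle vertex in $N(v) \cap C$. Combined with the isolation of a would-be external neighbor $u$ from $C \cup X$, this flexibility is exactly what is needed to force an induced copy of every orientation of $P_4$, and hence of $H$, in the bad $v$--$u$ arc direction.
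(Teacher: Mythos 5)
Your proof is correct and takes essentially the same approach as the paper's: strong connectivity of $C$ together with the maximality of $K$ (and, for $Y$-vertices, the strong-neighbor property of $X$) supplies induced $P_3$'s with both possible orientations of the end arc at each attachment vertex, and an external neighbor, being non-adjacent to all of $C \cup X$, extends these to induced copies of $P_4$ in every orientation, so $H$-freeness kills one arc direction. The only difference is presentational — the paper argues by contradiction that no vertex of $N[C \cup X]$ has both an in-neighbor and an out-neighbor outside the set, whereas you assign each vertex to $S^+$ or $S^-$ directly; the two formulations are equivalent.
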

\begin{figure}[t!]
    \centering
    \includegraphics[scale=0.3]{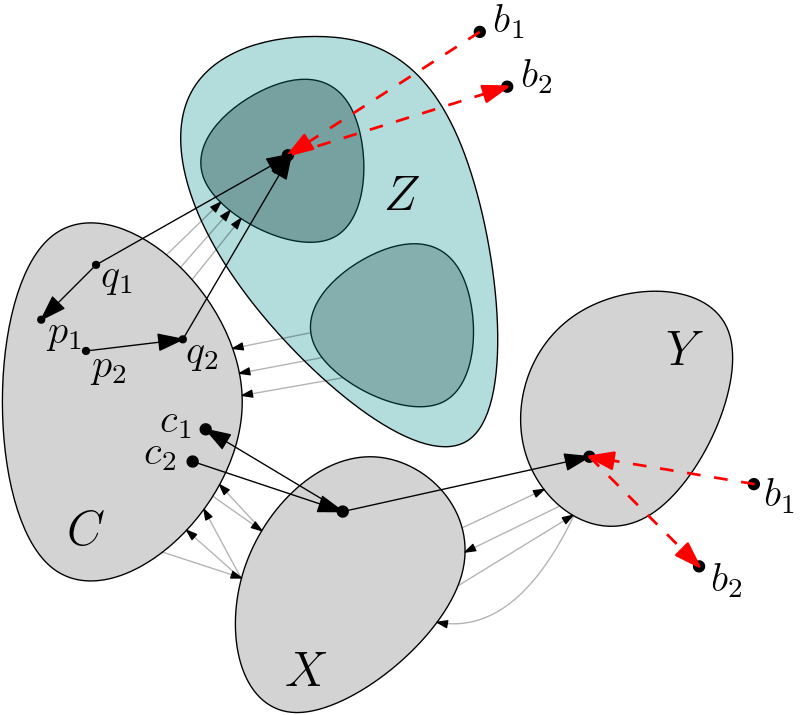}
    \caption{An illustration of the extension of a closed tournament $C$ into the \protect\hyperref[def:niceset]{dipolar} set $N[C \cup X]$.
    Highlighted in blue, $Z$ consists of neighbors of $C$ that are not strong, i.e., do not have both an in-neighbor and an out-neighbor in $C$.
    The set $X$ consists of the strong neighborhood of $C$, while set $Y$ contains all neighbors of $X$ not in $N[C]$. 
    Note that arcs between $Z$ and $X$ or $Y$ are not represented here.
    In Lemma~\ref{lem:buildinganiceset}, we prove that if there is some vertex in $N[C \cup X]$ with both an in-neighbor and an out-neighbor in the rest of the oriented graph (drawn in dashed red), then $N[C \cup X]$ contains all orientations of $P_4$ as an induced oriented subgraph.}
    \label{fig:neighborhoodClique}
\end{figure}

\begin{proof}
Let $Z$ denote the neighbors of $C$ that are not strong, and let $Y = N(X) \backslash N[C]$.
These sets satisfy $N[C \cup X] = C \cup X \cup Z \cup Y$ and the graph on $N[C \cup X]$ is illustrated in Figure~\ref{fig:neighborhoodClique}.

Then by definition, $N[C \cup X] = N[C] \cup Y$ and the only vertices of $N[C \cup X]$ with neighbors in $V(D) \setminus N[C \cup X]$ are in $Y \cup Z$.
Suppose for a contradiction that some $v \in Z \cup Y$ has both an in-neighbor $b_1$ and an out-neighbor $b_2$ in $V \setminus (N[C] \cup Y)$.
Let us first deal with the case where $v \in Y$.
\stmt{If $v \in Y$, then $D[N[C] \cup Y]$ contains $H$. \label{Ycase}}
Suppose $v \in Y$. Then, by definition, the following statements all hold:
\begin{itemize}
    \item There is some $x \in X$ such that $x$ and $v$ are adjacent.
    \item There are vertices $c_1, c_2 \in C$ where $c_1$ is an in-neighbor of $x$ and $c_2$ is an out-neighbor of $x$.
    \item $b_1, b_2$ are not adjacent to any of $x, c_1, c_2$.
\end{itemize}
Thus for some choice of $i,j \in \{1,2\}$ the set $\{c_i, x, v, b_j \}$ induces a copy of $H$. (See Figure~\ref{fig:neighborhoodClique}.)
This proves (\ref{Ycase}).
\\
\\
Since $D$ is an $H$-free oriented graph, it follows from (\ref{Ycase}) that $v \in Z$. Then by definition of $Z$, the neighbors of $v$ in $C$ are either all in-neighbors of $v$ or all out-neighbor of $v$.

\stmt{There exist arcs $(q_1, p_1), (p_2, q_2) \in E(C)$ such that $v$ is adjacent to $q_1, q_2$ and non-adjacent to $p_1, p_2$.
\label{edges}}
It follows from the fact that $\omega(C) = \omega(D)$ that $v$ has some non-neighbor in $C$.
Since $C$ is strongly connected, $N(v) \cap C$ must have both an incoming arc and an outgoing arc from $C \setminus N(v)$.
Let $p_1,p_2$ be vertices of $C \backslash N(v)$ witnessing this fact and let $q_1,q_2$ their respective neighbors in $N(v) \cap C$. This proves (\ref{edges}).
\\
\\
It follows that for some $i,j \in \{1,2\}$ the graph induced by $\{p_i, q_i, v, b_j\}$ is a copy of $H$, a contradiction. (See Figure~\ref{fig:neighborhoodClique}).
\end{proof}

\section{First steps towards bounding the dichromatic number of $N[C \cup X]$}
\label{sec:common}

As usual, we suppose $D$ satisfies the assumptions given in Scenario~\ref{inductivehypothesis} all hold.
We choose a tournament of order $\omega(D)$ and a directed path $P$ that form a \hyperref[def:closedtournament]{path-minimizing} tournament $C$ in $D$.
Let $X$ be the \hyperlink{def:strongnbd}{strong neighborhood} of $C$ and $Y = N(X) \setminus N[C]$ as before.
In the previous section we showed that $N[C \cup X]$ is a \hyperref[def:niceset]{dipolar} set.
Thus, by Lemma~\ref{lem:nice-set} we can prove that all orientations of $P_4$ are $\dichi$-bounding by proving that $\dichi(N[C \cup X])$ is bounded in terms of $\omega(D)$ and $\gamma$, the maximum value of $\dichi(D')$ for any $H$-free $D'$ with clique number less than $\omega(D)$.

By definition, $$\dichi(N[C \cup X]) \leq \dichi(N[K]) +  \dichi(V(P)) + \dichi(N(P) \setminus N[K]) + \dichi(Y).$$
We will bound $\dichi(N[C \cup X])$ by bounding each of the terms on the right-hand side of the equation.
We bound the dichromatic number of $N[K]$ and $P$ in Subsection~\ref{sub:PNK} and we bound the dichromatic number of $Y$ in Subsection~\ref{sub:second}. We are able to use the same techniques for each choice of $H$ when proving these bounds.

As already hinted, bounding $\dichi(N[C \cup X])$ is non-trivial because we have no control over the cardinality of $P$. 
Hence, we cannot obtain a useful bound on $\dichi(N(P) \setminus N[K])$ by simply applying Observation~\ref{obs:neighbourhoodtrivi}.
In the next section, we will show how to bound $\dichi(N(P) \setminus N[K])$. We will require separate proofs for $H = \hyperlink{def:q4}{\ora{Q_4}}, \hyperlink{def:p4}{\ora{P_4}}, \hyperlink{def:a4}{\ora{A_4}}$.

\subsection{Bounding the dichromatic number of $V(P)$ and $N[K]$}\label{sub:PNK}

We bound the dichromatic number of $V(P)$ and $N[K]$ by an easy observation about \say{forward-induced} paths.
\hypertarget{def:forwardinduced}{We say a directed path $p_1 \to p_2 \to \dots \to p_t$ is \textcolor{bordeaux}{\emph{forward-induced}} if no arc of the form $(p_i, p_j)$ exists where $j > i + 1$ and $i, j \in \{1,2, \dots, t \}$.}

\begin{observation}\label{obs:dichi-shortest-path}
Let $D$ be an oriented graph and let $P \subseteq D$ be a \hyperlink{def:forwardinduced}{forward-induced} directed path.
Then $\dichi(P) \leq 2$.
\end{observation}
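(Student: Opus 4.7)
The plan is to exhibit an explicit dicoloring with two colors. Label the vertices along the directed path as $p_1, p_2, \ldots, p_t$ in order, and assign the color $i \bmod 2$ to $p_i$. It then suffices to show that each of the two color classes induces an acyclic subdigraph of $D$.

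To see this, I would first classify the arcs of $D[V(P)]$. By definition of the path, for every $i \in \{1, \ldots, t-1\}$ the arc $(p_i, p_{i+1})$ belongs to $D$. Because $D$ is an oriented graph, between $p_i$ and $p_j$ with $i < j$ there is at most one arc, and that arc is either a forward path arc $(p_i, p_{i+1})$ (when $j = i+1$) or potentially a backward chord $(p_j, p_i)$. The forward-induced hypothesis forbids any arc $(p_i, p_j)$ with $j > i + 1$, so every arc of $D[V(P)]$ is either a consecutive forward arc $p_i \to p_{i+1}$ or a backward chord whose tail has strictly larger index than its head.

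Now observe that consecutive forward arcs always connect vertices of opposite parity, hence always cross between the two color classes. Therefore, restricted to a single color class, only backward chords remain, and every such chord strictly decreases the index. Any directed cycle within a color class would produce a closed sequence of arcs whose tail indices are all strictly larger than their head indices, which would force a strict decrease in index around a cycle, a contradiction. Hence each color class induces an acyclic subdigraph, giving $\dichi(P) \leq 2$.

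There is no real obstacle here: the only conceptual point is that the definition of \hyperlink{def:forwardinduced}{forward-induced} forbids forward chords but not backward ones, so the entire content of the proof is realizing that a parity coloring along the path separates the forward path arcs from the only surviving arcs within a color class, all of which go strictly backward and therefore cannot close up into a cycle.
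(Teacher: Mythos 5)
Your proof is correct and follows essentially the same route as the paper: color the vertices alternately along $P$, note that the path arcs then join different color classes, and use the forward-induced condition to conclude that any monochromatic directed cycle could only use index-decreasing arcs, which is impossible. No gaps; the argument matches the paper's proof in both the coloring and the contradiction.
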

\begin{proof}
Let the vertices of $P$ be $p_1 \to p_2 \to \ldots \to p_\ell$, in order.
We assign colors to the vertices of $P$ by alternating the colors along $P$.
Suppose there is some monochromatic directed cycle $Q$ in the oriented graph induced by $V(P)$.
Then $Q$ contains no arc of $P$.
Hence $Q$ must contain some arc $(p_i, p_j)$ with $i,j \in \{1,2, \dots, \ell\}$ and $j > i +1$, contradicting the defintion of forwards-induced.
\end{proof}

Now, we turn to bounding the dichromatic number of our dipolar set, $N[C \cup X]$ by bounding $\dichi(N[K] \cup V(P))$.
\begin{observation}\label{obs:dichi-closed-clique-nbd}
Let $D$ be an oriented graph satisfying $\dichi(N(v)) \leq \gamma$ for all $v \in V$.
Let $K$ be a maximum tournament and $P$ be a directed path in $D$ such that $K$ and $P$ form a  \hyperref[def:closedtournament]{path-minimizing closed tournament} $C$ in $D$.
Then, $\dichi(N[K]) \leq \omega \cdot \gamma$.
Moreover, $$\dichi(N[C]) \leq \dichi(N(P) \setminus N[K]) + \omega \cdot \gamma + 2.$$
\end{observation}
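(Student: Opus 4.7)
The plan is to decompose both $N[K]$ and $N[C]$ into pieces whose dichromatic numbers we can already control, using Observations~\ref{obs:neighbourhoodtrivi} and~\ref{obs:dichi-shortest-path} together with the path-minimality of the pair $(K,P)$.

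For the first bound, I would start by noting that since $|K| = \omega \geq 2$ and $K$ is a tournament, every vertex of $K$ is adjacent to some other vertex of $K$, so $K \subseteq \bigcup_{v \in K} N(v)$ and hence $N[K] = \bigcup_{v \in K} N(v)$. Applying Observation~\ref{obs:neighbourhoodtrivi} with $X := K$ then directly yields $\dichi(N[K]) \leq |K| \cdot \gamma = \omega \gamma$.

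For the second bound, the key step is to verify that the directed path $P = p_1 \to p_2 \to \cdots \to p_\ell$ underlying the closed tournament is \hyperlink{def:forwardinduced}{forward-induced}. Suppose for contradiction that some arc $(p_i, p_j) \in E(D)$ exists with $j > i+1$. Then $p_1 \to \cdots \to p_i \to p_j \to \cdots \to p_\ell$ is a strictly shorter directed path with the same endpoints as $P$; in particular $p_1$ still lies in a source component and $p_\ell$ still lies in a sink component of the digraph induced by $K$, so this shorter path together with $K$ still forms a closed tournament, contradicting the minimality of $|P|$. With $P$ now known to be forward-induced, Observation~\ref{obs:dichi-shortest-path} gives $\dichi(V(P)) \leq 2$.

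To conclude, I would decompose $C = K \cup V(P)$ and observe that every vertex in $N[C]$ lies either in $N[K]$, in $V(P)$, or in $N(P) \setminus N[K]$; that is,
\[ N[C] \;\subseteq\; N[K] \;\cup\; V(P) \;\cup\; \bigl(N(P)\setminus N[K]\bigr). \]
Subadditivity of the dichromatic number under union (color disjoint pieces with disjoint palettes) then yields
\[ \dichi(N[C]) \;\leq\; \dichi(N[K]) + \dichi(V(P)) + \dichi\bigl(N(P)\setminus N[K]\bigr) \;\leq\; \omega\gamma + 2 + \dichi\bigl(N(P)\setminus N[K]\bigr), \]
as required. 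The only subtle point is the shortcutting argument; since replacing a subpath by a single arc does not move the endpoints $p_1$ and $p_\ell$, the verification that $(K,P')$ remains a closed tournament is immediate from Definition~\ref{def:closedtournament}, and the rest of the argument is a routine union bound.
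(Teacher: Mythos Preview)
Your proposal is correct and follows essentially the same approach as the paper's proof: both argue that $N[K]=\bigcup_{v\in K}N(v)$ (using $|K|\ge 2$) to invoke Observation~\ref{obs:neighbourhoodtrivi}, observe that path-minimality forces $P$ to be forward-induced so that Observation~\ref{obs:dichi-shortest-path} gives $\dichi(V(P))\le 2$, and then conclude by the decomposition $N[C]=N[K]\cup V(P)\cup(N(P)\setminus N[K])$. You simply spell out the shortcutting argument and the union bound in more detail than the paper does.
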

\begin{proof}
Since $|K| = \omega(D) > 1$, we have $N[K] = \bigcup_{x \in K} N(x)$, and hence $\dichi(N[K]) \leq \omega \cdot \gamma$
by Observation~\ref{obs:neighbourhoodtrivi}. This proves the first statement.
By definition, $N[C] = (N(P) \setminus N[K]) \cup N[K] \cup P$.
Since $C$ is \hyperref[def:closedtournament]{path-minimizing}, $P$ is \hyperlink{def:forwardinduced}{forward-induced}.
Thus, we obtain the second statement by Observation~\ref{obs:dichi-shortest-path}.
\end{proof}

Thus, it only remains to bound the dichromatic number of $Y$ and $N(P) \setminus N[K]$ in order to bound the dichromatic number of our \hyperref[def:niceset]{dipolar} set $N[C \cup X]$.

\subsection{Bounding the dichromatic number of $Y$}\label{sub:second}

In this subsection, we bound the dichromatic number of $Y = N(X) \backslash N[C]$. We first state a more general lemma, which gives the bound on $\dichi(Y)$ as a direct corollary.

\begin{lemma}\label{lem:QRS}
Let $H$ be an oriented $P_4$ and let $D$ be an $H$-free oriented graph.
Suppose there is a partition of $V(D)$ into sets $Q, R, S$ such that there is no arc between $Q$ and $S$, every $r \in R$ has both an in-neighbor and an out-neighbor in $Q$ and every $s \in S$ has a neighbor in $R$.
Let $\gamma$ be an integer such that for every $r\in R$, we have $\dichi(N(r)) \leq \gamma$.
Then $\dichi(S) \leq 2\gamma$.
\end{lemma}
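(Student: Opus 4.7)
The plan is to directly exhibit a dicoloring of $S$ using $2\gamma$ colors. For each $r \in R$, fix a dicoloring $\phi_r : N(r) \to \{1, \ldots, \gamma\}$, which exists by hypothesis. Fix an arbitrary linear order $\preceq$ on $R$, and for each $s \in S$ let $r(s)$ denote the $\preceq$-minimum element of $N(s) \cap R$, which exists because every $s$ has a neighbor in $R$. Partition $S = S^+ \sqcup S^-$ according to whether $r(s) \to s$ or $s \to r(s)$, and color each $s$ with $\psi(s) := (\sigma(s), \phi_{r(s)}(s)) \in \{+,-\} \times \{1, \ldots, \gamma\}$, using $2\gamma$ colors in total.

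Suppose toward a contradiction that $\psi$ admits a monochromatic directed cycle $s_1 \to s_2 \to \cdots \to s_k \to s_1$ in $D[S]$. All $s_i$ lie in a common class $S^{\sigma}$ and share a common value $\phi_{r(s_i)}(s_i) = c$. I claim that all $r(s_i)$ coincide. Once this is established, the cycle sits inside $N(r^\star)$ for some common $r^\star \in R$ and is monochromatic under $\phi_{r^\star}$, contradicting the validity of $\phi_{r^\star}$ as a dicoloring.

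To prove the claim, consider an arc $s_i \to s_{i+1}$ and suppose $s_{i+1} \notin N(r(s_i))$. Since $r(s_i)$ has both an in-neighbor $q^-$ and an out-neighbor $q^+$ in $Q$, for each choice $q \in \{q^-, q^+\}$ the set $\{q, r(s_i), s_i, s_{i+1}\}$ induces a $P_4$ in $D$: the only candidate chord, between $r(s_i)$ and $s_{i+1}$, is absent by assumption, while the non-edges from $q$ to $s_i$ and to $s_{i+1}$ follow from the absence of arcs between $Q$ and $S$. The two choices of $q$, combined with the sign $\sigma(s_i)$ which determines the orientation of the $r(s_i)$--$s_i$ edge, yield four possible orientations of this induced $P_4$. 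Symmetrically, if $s_i \notin N(r(s_{i+1}))$, an induced $P_4$ on $\{s_i, s_{i+1}, r(s_{i+1}), q'\}$ appears with four further possible orientations. A short case check shows that for each of the four orientations of $P_4$ and each sign class, at least one of these two constructions produces an induced copy of $H$. $H$-freeness therefore forces, along every arc of the cycle, either $s_{i+1} \in N(r(s_i))$ (giving $r(s_{i+1}) \preceq r(s_i)$ by minimality) or $s_i \in N(r(s_{i+1}))$ (giving $r(s_i) \preceq r(s_{i+1})$), and crucially the forced direction is uniform within the sign class of the cycle. Hence the cyclic chain of $\preceq$-inequalities degenerates and all $r(s_i)$ coincide, establishing the claim.

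The main obstacle is verifying the uniformity of the forced direction within each sign class. The tabulation is as follows: for $H = \ora{P_4}$, $S^+$-arcs force $r(\cdot)$ weakly decreasing along the cycle while $S^-$-arcs force it weakly increasing; for $H = \ora{A_4}$ the roles of $S^+$ and $S^-$ are swapped; and for $H \in \{\ora{Q_4}, \ora{Q_4}'\}$ a single direction of monotonicity is enforced irrespective of sign. This is a routine enumeration once one records, for each side and each choice of $q$, which of the four orientations of $P_4$ is obtained.
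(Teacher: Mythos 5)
Your proof is correct, but it takes a genuinely different route from the paper's. The paper argues by induction on $|S|$: it picks a single $r \in R$ with a neighbor in $S$, shows that no $s \in N(r)\cap S$ can have both an in-neighbor and an out-neighbor in $S \setminus N(r)$ (one uniform argument, since the two choices of a $Q$-neighbor of $r$ combined with the two possible outside neighbors of $s$ realize all four orientations of $P_4$ at once), and then extends an inductively obtained $2\gamma$-dicoloring of $S\setminus N(r)$ by coloring the two resulting parts of $N(r)\cap S$ from two disjoint palettes of $\gamma$ colors. You instead exhibit the $2\gamma$-dicoloring globally and non-inductively: anchor each $s$ at the $\preceq$-minimum neighbor $r(s)$, record a sign bit, and show that any monochromatic directed cycle must have a constant anchor, hence lies inside a single $N(r^\star)$ and contradicts the dicoloring $\phi_{r^\star}$. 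The price is the orientation-by-orientation verification that the forced alternative ($s_{i+1}\in N(r(s_i))$ versus $s_i\in N(r(s_{i+1}))$) is uniform within a sign class; you only assert this tabulation, but it does check out: for $\ora{P_4}$ the first construction (using the in-neighbor of $r(s_i)$ in $Q$) applies on $S^+$ and the second (using the out-neighbor of $r(s_{i+1})$) on $S^-$, for $\ora{A_4}$ the roles of the sign classes swap, and for $\ora{Q_4}$ and $\ora{Q_4}'$ a single construction works for both signs, exactly as you state, so the cyclic chain of inequalities collapses as claimed. What your approach buys is an explicit coloring with no induction on $|S|$; what the paper's buys is brevity and a single argument covering all four orientations simultaneously, because a vertex with both an in- and an out-neighbor outside $N(r)$ yields every orientation of $P_4$ in one stroke.
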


\begin{figure}
    \centering
    \includegraphics[scale=0.2]{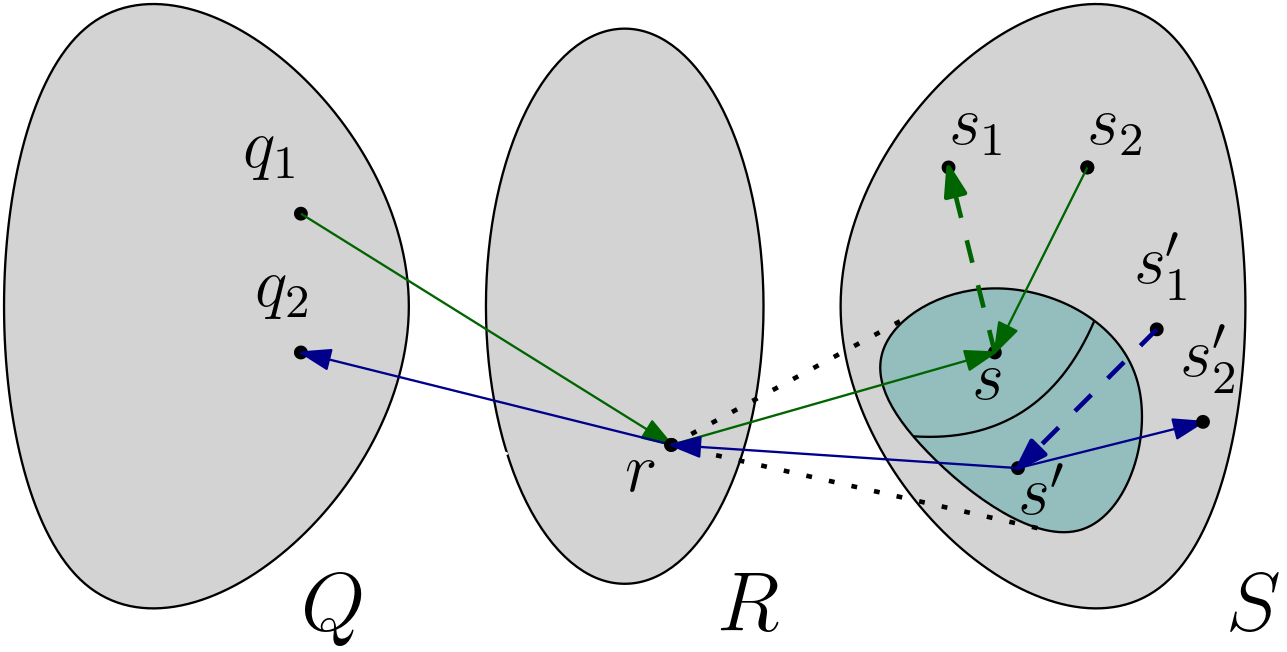}
    \caption{
    Vertex sets $Q,R,S$, such that no arc lies between $Q$ and $S$, the vertices in $R$ are all \protect\hyperlink{def:strongnbd}{strong neighbors} of $Q$ and $S$ is a subset of neighbors of $R$. We illustrate the case of graphs forbidding $\protect\hyperlink{def:p4}{\protect\ora{P_4}}$.
    Other orientations behave symmetrically. In dark green, a vertex $r \in R$ is depicted with an out-neighbor $s \in S$. Then if $s$ has an out-neighbor $s_1 \in S \setminus N(r)$ there would be an induced $\protect\hyperlink{def:p4}{\protect\ora{P_4}}$, a contradiction. Symmetrically in dark blue, an in-neighbor $s' \in S$ of $r$ cannot admit an in-neighbor $s_2' \in S \backslash N(r)$.
     }
    \label{fig:QRS}
\end{figure}

\begin{proof}
We proceed by induction on $|S|$.
Suppose $S \neq \emptyset$. 
Then there is some $r \in R$ that has a neighbor in $S$.
\stmt{We may partition $N(r) \cap S$ into two sets $S_1, S_2$ such that no vertex in $S_1$ has an in-neighbor in $S \setminus N(r)$ and no vertex in $S_2$ has an out-neighbor in $S \setminus N(r)$. \label{stmt:S}}
By definition $r$ has an in-neighbor $q_1$ and an out-neighbor $q_2$ in $Q$. 
Suppose some $s \in N(r)\cap S$ has both an in-neighbor $s_i$ and an out-neighbor $s_j$ in $S \setminus N(r)$.
Then there is an copy of $H$ induced by $\{q_i, r, s, s_j\}$ for some choice of $i,j \in \{1,2\}$, a contradiction. (See Figure~\ref{fig:QRS}.) This proves (\ref{stmt:S}).
\\
\\
Let $D_1, D_2$ be two disjoint sets of $\gamma$ colors each.
By induction we can dicolor $S \setminus N(r)$ with $D_1 \cup D_2$.
By (\ref{stmt:S}), we can extend this coloring to a dicoloring of $S$ by dicoloring $S_i$ with colors from $D_i$ for $i \in \{1,2\}$.
\end{proof}

\begin{corollary}\label{corr:Y-dichi}
Let $H$ be an oriented $P_4$ and let $D$ be a strongly connected $H$-free oriented graph.
Let $K$ be a maximum tournament and $P$ be a directed path in $D$ such that $K, P$ form a \hyperref[def:closedtournament]{path-minimizing closed tournament}.
Let $\gamma$ denote the maximum value of $\dichi(N(v))$ for any $v \in D$.
Then, $\dichi(N(K)) \leq \omega(D) \cdot \gamma$.
Let $X$ denote the \hyperlink{def:strongnbd}{strong neighborhood} $C$ and let $Y$ denote the set $N(X) \setminus N[C]$.
Then, $$\dichi(Y) \leq 2\gamma.$$ 
Moreover,
$$\dichi(N[C \cup X]) \leq \dichi(N(P) \setminus N[K]) + (\omega(D) + 2)\cdot \gamma  + 2.$$
\end{corollary}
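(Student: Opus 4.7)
The plan is to obtain each of the three bounds as a direct consequence of results already established in the paper. The first bound, $\dichi(N(K)) \leq \omega(D)\cdot \gamma$, is an immediate application of Observation~\ref{obs:neighbourhoodtrivi} to the set $K$, whose cardinality is exactly $\omega(D)$. Note that under Scenario~\ref{inductivehypothesis} and the hypothesis of the corollary, every vertex $v \in V(D)$ has $\omega(N(v)) < \omega(D)$ (otherwise $K$ would not be maximum), so $\dichi(N(v)) \leq \gamma$ is guaranteed by the inductive hypothesis.

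For the second bound, I would apply Lemma~\ref{lem:QRS} to the induced subgraph $D[V(C) \cup X \cup Y]$ with the partition $Q = V(C)$, $R = X$, $S = Y$. The three prerequisites of the lemma are built into the definitions of $X$ and $Y$: there are no arcs between $V(C)$ and $Y$ because $Y = N(X) \setminus N[C]$ lies outside $N(V(C))$; every $r \in X$ has both an in-neighbor and an out-neighbor in $V(C)$ by the very definition of the strong neighborhood; and every $s \in Y$ has a neighbor in $X$ because $Y \subseteq N(X)$. Since $D[V(C) \cup X \cup Y]$ is $H$-free as an induced subgraph of $D$, and $\dichi(N(r)) \leq \gamma$ for each $r \in X$, Lemma~\ref{lem:QRS} gives $\dichi(Y) \leq 2\gamma$.

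For the third bound, the key observation is that since $X \subseteq N(V(C)) \subseteq N[C]$, we have the clean decomposition
\[
    N[C \cup X] \;=\; N[C]\, \cup\, (N(X) \setminus N[C]) \;=\; N[C] \cup Y.
\]
Consequently, $\dichi(N[C \cup X]) \leq \dichi(N[C]) + \dichi(Y)$. Plugging in the bound $\dichi(N[C]) \leq \dichi(N(P) \setminus N[K]) + \omega(D)\cdot\gamma + 2$ from Observation~\ref{obs:dichi-closed-clique-nbd} together with $\dichi(Y) \leq 2\gamma$ yields the claimed inequality. Since every step here is either a direct invocation of an earlier statement or a set-theoretic manipulation justified by the definitions of $X$ and $Y$, no real obstacle arises; the only point requiring mild care is verifying all three hypotheses of Lemma~\ref{lem:QRS} and checking the disjointness of $V(C)$, $X$, $Y$, all of which follow immediately from the fact that $Y$ avoids $N[C]$.
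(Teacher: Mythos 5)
Your proposal is correct and follows essentially the same route as the paper: the first bound via Observation~\ref{obs:neighbourhoodtrivi}, the bound $\dichi(Y)\leq 2\gamma$ via Lemma~\ref{lem:QRS} applied to $D[C\cup X\cup Y]$ with $Q=C$, $R=X$, $S=Y$, and the final inequality from the decomposition $N[C\cup X]=N[C]\cup Y$ combined with Observation~\ref{obs:dichi-closed-clique-nbd}. Your explicit verification of the hypotheses of Lemma~\ref{lem:QRS} is just a more detailed spelling-out of what the paper dismisses with ``by definition,'' and the aside about Scenario~\ref{inductivehypothesis} is unnecessary since the corollary's own definition of $\gamma$ already gives $\dichi(N(v))\leq\gamma$.
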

\begin{proof}
By definition we may applying Lemma~\ref{lem:QRS} to the induced subgraph $D[C \cup X \cup Y]$ with $Q \coloneqq C$, $R \coloneqq X$ and $S \coloneqq Y$ (see Figure~\ref{fig:neighborhoodClique}). 
Hence, $\dichi(Y) \leq 2\gamma$.
By definition, $N[C \cup X] = Y \cup N[C]$.
Hence, $$\dichi(N[C \cup X]) \leq \dichi(Y) + \dichi(N[C]) \leq 2 \gamma + \dichi(N[C]).$$
Then by Observation~\ref{obs:dichi-closed-clique-nbd} and since we obtain:
\[\dichi(N[C \cup X]) \leq \dichi(N(P) \setminus N[K]) + (\omega(D) + 2) \cdot \gamma + 2. \qedhere\]
\end{proof}

Thus, if we can bound $\dichi(N(P) \setminus N[K])$ we can bound the dichromatic number of our \hyperref[def:niceset]{dipolar} set $N[C \cup X]$ by Corollary~\ref{corr:Y-dichi}.
We will handle this in the next section.

\section{Completing the bound on the dichromatic number of our dipolar set}
\label{sec:firstnbd}
In this section, we will prove a bound on the dichromatic number of $N(P) \setminus N[K]$ where $K$ is a maximum tournament and $P$ is a directed path that forms a \hyperref[def:closedtournament]{path-minimizing closed tournament} $C$ in an oriented graph that forbids some orientation of $P_4$.
By Corollary~\ref{corr:Y-dichi}, this will imply that every oriented graph has which forbids some orientation of $P_4$ has a \hyperref[def:niceset]{dipolar} set of bounded dichromatic number.
Thus, by Lemma~\ref{lem:nice-set}, this will give us our main result.

By definition of \pathminimizingclosedtournament, $P$ is a \forwardinduced directed path.
In Subsection~\ref{sub:firstStructure}, we start by giving some structural properties on properties of the neighborhood of forwards-induced paths.
Then, in Subsection~\ref{sub:firstQ4} we show how to use these properties to bound the dichromatic number of the first neighborhood of $C$ for $\hyperlink{def:q4}{\ora{Q_4}}$-free graphs. (Recall, the bound for $\hyperlink{def:q4}{\ora{Q_4}}$-free oriented graphs implies the bound for $\hyperlink{def:q4}{\ora{Q_4}'}$-free oriented graphs.)

When $H$ is one of the other two orientations, $\hyperlink{def:p4}{\ora{P_4}}$ and $\hyperlink{def:a4}{\ora{A_4}}$, we required a finer analysis of $N(P)$ in order to bound $\dichi(N(P))$.
We handle this case in Subsections~\ref{sub:firstWpWa}--\ref{sub:firstRa}.

\subsection{Forbidden arcs among neighbors of a forward-induced directed path}\label{sub:firstStructure}

We define two partitions of the first neighborhood of a directed path and show how to forbid some of the arcs between classes of each partition in an $H$-free oriented graph.

\begin{definition}
\label{def:FL}
Let $P = p_1 \to p_2 \to \dots \to p_ \ell$ be a \hyperlink{def:forwardinduced}{forward-induced} directed path in an oriented graph.
For brevity, for any $v \in N(P)$ and $i,j \in \{1,2, \dots, \ell\}$
we say $p_i$ is the \emph{first} neighbor of $v$ on $P$ if $v$ is adjacent to $p_i$ and non-adjacent to $p_{i'}$ for each $1 \leq i' < i \leq \ell$.
Similarly, $p_j$ is the \emph{last} neighbor of $v$ on $P$ if $v$ is adjacent to $p_j$ and non-adjacent to each $p_{j'}$ for each $1 \leq j < j' \leq \ell$.
We will define two partitions of $N(P)$ according to their first and last neighbors in $V(P)$, respectively.
\begin{itemize}
\item For each $i \in \{1,2, \dots, \ell\}$ we say $v \in N(P)$ is in $F_i$ if $p_i$ is the first neighbor of $v$ on $P$.
This yields partition $(F_1, F_2, \dots, F_\ell)$, which we call \emph{\color{bordeaux} the partition of $N(P)$ by \emph{first attachment} (on $P$)}.
\item Symmetrically, for each $j \in \{1,2, \dots, \ell\}$ we say $v \in L_j$ if $p_j$ is the last neighbor of $v$ on $P$.
This yields partition $(L_1, L_2, \dots, L_\ell)$, which we call the \textcolor{bordeaux}{\emph{partition of $N(P)$ by last attachment (on $P$)}}.
\end{itemize}
For each $i \in \{1,2, \dots, \ell\}$ we refine each partition by dividing each $F_i, L_i$ into the in-neighbors and out-neighbors of $v$.
We define \textcolor{bordeaux}{$F_i^+$} and \textcolor{bordeaux}{$L_i^+$} to be the sets consisting of all the in-neighbors of $p_i$ in $F_i, L_i$, respectively.
Similarly, we define \textcolor{bordeaux}{$F_i^-$} and \textcolor{bordeaux}{$L_i^-$} to be the sets consisting of all the out-neighbors of $p_i$ in $F_i, L_i$, respectively.
\end{definition}

\begin{observation}\label{obs:dagNeighborhood}
Let $P = p_1 \to p_2 \to \dots \to p_ \ell$ be a forward-induced directed path in an oriented graph $D$.
Let $(F_1, F_2, \dots, F_\ell)$,  $(L_1, L_2, \dots, L_\ell)$ be the \hyperref[def:FL]{partitions of $N(P)$ by first attachment and last attachment} on $P$, respectively.
Let $2 \leq i < j \leq \ell - 1$.
Then the following statements all hold:
\begin{itemize}
    \item If $D$ is $\hyperlink{def:q4}{\ora{Q_4}}$-free, there are no arcs from $F_j$ to $F_i$.
    \item If $D$ is $\hyperlink{def:p4}{\ora{P_4}}$-free, there are no arcs from $F_i^-$ to $F_j$, and no arcs from $L_i$ to $L_j^+$.
    \item If $D$ is $\hyperlink{def:a4}{\ora{A_4}}$-free, there are no arcs from $F_i^+$ to $F_j$, and no arcs from $L_i$ to $L_j^-$.
\end{itemize}
\end{observation}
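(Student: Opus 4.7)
The plan is to prove each of the three implications by a uniform case analysis: given a would-be violating arc, I would exhibit a specific four-vertex set, check via the first/last-attachment definitions that its induced subdigraph is a $P_4$, and then read off its orientation to detect a copy of the forbidden $H$. Concretely, for every assertion about the partition $(F_1,\dots,F_\ell)$ I would take $W = \{p_{i-1}, p_i, v, u\}$ (which makes sense because $i \geq 2$), and for every assertion about $(L_1,\dots,L_\ell)$ I would take $W = \{u, v, p_j, p_{j+1}\}$ (which makes sense because $j \leq \ell-1$).

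The non-adjacencies that make $D[W]$ an induced $P_4$ are automatic from the partitions. For the set $\{p_{i-1}, p_i, v, u\}$ with $v \in F_i$, $u \in F_j$, $i<j$: since $v$'s first neighbor on $P$ is $p_i$, the vertex $p_{i-1}$ is not adjacent to $v$; since $u$'s first neighbor on $P$ is $p_j$ and both $i-1 < j$ and $i < j$, neither $p_{i-1}$ nor $p_i$ is adjacent to $u$. The three symmetric non-adjacencies hold for the last-attachment quadruple. Hence in every case $D[W]$ is an induced $P_4$ whose three arcs are $(p_{i-1},p_i)$, the edge $p_i v$ (whose direction is determined by whether $v \in F_i^+$ or $F_i^-$), and the given arc $u \to v$; in the last-attachment case the arcs are $u \to v$, the edge $v p_j$ (direction given by $L_j^{\pm}$), and $(p_j, p_{j+1})$.

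It then remains to read off the resulting orientations. For the $\ora{Q_4}$-free claim, the two sign choices $v \in F_i^{\pm}$ yield the patterns $\to\leftarrow\leftarrow$ and $\to\to\leftarrow$; the latter is obtained from the former by reversing the vertex order, so both are copies of the digraph $\ora{Q_4}$ and either case gives a contradiction, which is why no sign restriction is imposed. For the $\ora{P_4}$-free claim, the sign $v \in F_i^-$ (respectively $v \in L_j^+$) is exactly the one for which the induced orientation becomes $\to\to\to = \ora{P_4}$; the opposite sign instead produces $\to\leftarrow\to$, which is $\ora{A_4}$ and hence is not forbidden, explaining the asymmetry of the statement. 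For the $\ora{A_4}$-free claim the roles are exactly swapped: $v \in F_i^+$ (respectively $v \in L_j^-$) produces $\to\leftarrow\to \cong \ora{A_4}$, while the opposite sign produces $\ora{P_4}$.

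The main potential pitfall is purely bookkeeping: matching each of the six pattern/sign combinations to the correct forbidden subgraph, and remembering that each of $\ora{P_4}, \ora{Q_4}, \ora{A_4}$ is invariant under reversing the vertex order of the underlying $P_4$. Notably, the forward-induced property of $P$ is not used in this observation — the only inputs are the definitions of $F_i, F_j, L_i, L_j$ together with the arcs of $P$ itself — so the argument reduces entirely to a finite check of orientations.
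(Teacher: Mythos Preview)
Your approach is essentially identical to the paper's: for each bullet you take the four-vertex set $\{p_{i-1},p_i,v,u\}$ (or $\{u,v,p_j,p_{j+1}\}$ in the last-attachment case), verify from the definition of first/last attachment that it induces a $P_4$, and read off the orientation to find the forbidden subgraph. One small slip: in your general description you write ``the given arc $u\to v$'' for all first-attachment cases, but for the $\ora{P_4}$- and $\ora{A_4}$-free bullets the violating arc actually goes $v\to u$ (from $F_i^{\mp}$ to $F_j$); your subsequent orientation readings $\to\to\to$ and $\to\leftarrow\to$ are nonetheless correct, so this is only a notational inconsistency, and your remark that the forward-induced hypothesis is never invoked is accurate.
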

\begin{proof}
Let $2 \leq i < j \leq \ell - 1$. We prove each statement individually.
\stmt{If $D$ is $\hyperlink{def:q4}{\ora{Q_4}}$-free, there are no arcs from $F_j$ to $F_i$. \label{dag:q4}}
Suppose for some $v \in F_j$ and $w \in F_i$ that $(v, w) \in E(D)$.
Then the vertices $p_{i-1},p_i,w, v$, induce a $P_4$ in $D$ with orientation $p_{i-1} \rightarrow p_i \rightarrow w \leftarrow v$ or orientation $p_{i-1} \rightarrow p_i \leftarrow w \leftarrow v$ depending on whether $w \in F_i^+$ or $w \in F_i^-$.
In either case we obtain an induced $\hyperlink{def:q4}{\ora{Q_4}}$ on $p_{i-1},p_i,w, v$.
This proves (\ref{dag:q4}).
\stmt{If $D$ is $\hyperlink{def:p4}{\ora{P_4}}$-free, there are no arcs from $F_i^-$ to $F_j$, and no arcs from $L_i$ to $L_j^+$.\label{dag:p4}}
Suppose for some $v \in F_i^-$ and $w \in F_j$ that $(v, w) \in E(D)$.
Then $p_{i-1} \to p_i \to v \to w$ is an induced $\hyperlink{def:p4}{\ora{P_4}}$ (see the dark blue arcs in Figure~\ref{fig:neighborhoodpartition}). Hence $D$ is not $\hyperlink{def:p4}{\ora{P_4}}$-free. This proves the first part of the statement (\ref{dag:p4}).
The argument that there are no arcs from $L_i$ to $L_j^+$ in an $\hyperlink{def:p4}{\ora{P_4}}$-free graph is symmetric.
This proves (\ref{dag:p4}).
\begin{figure}[b!]
    \centering
    \includegraphics[scale=0.25]{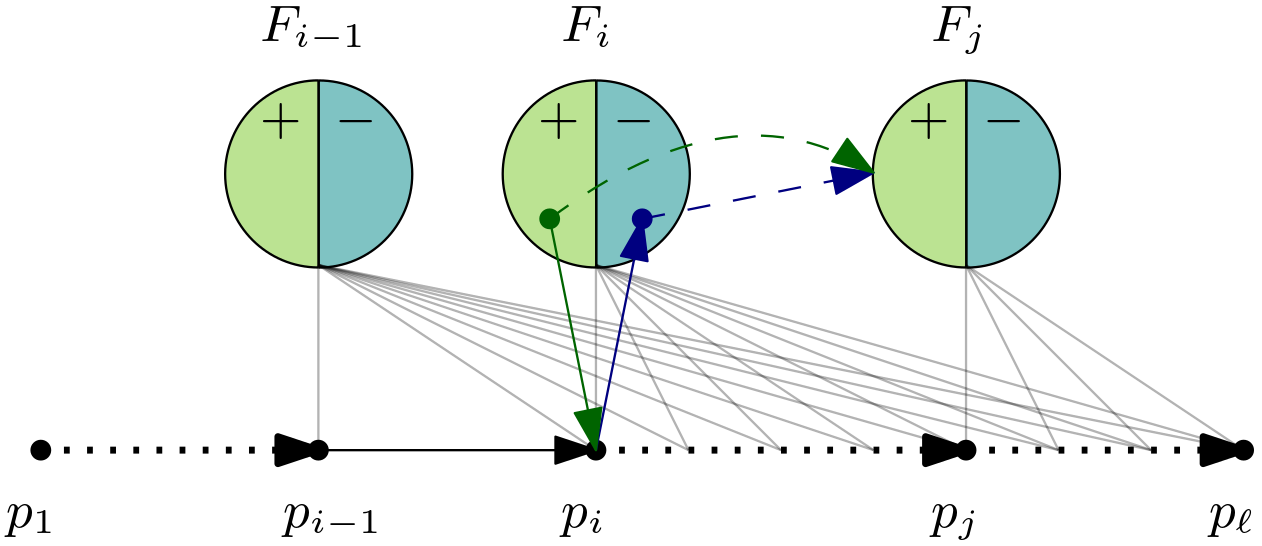}
    \caption{A shortest path directed path $P=p_1 \to ... \to p_{\ell}$ along with the partition $(F_1,...,F_{\ell})$ of $N(P)$ by first attachment on $P$. Note that the setting is symmetric for the partition of $N(P)$ by the last attachment. Each class of the partition $F_i$ is represented as a circle and further split into $F_i^+$ in green and $F_i^-$ in blue, all possible arcs towards $P$ are drawn in gray. An arc from $F_i^-$ to $F_j$ with $j > i$ would induce a $\protect\hyperlink{def:p4}{\protect\ora{P_4}}$ using $(p_{i-1},p_i)$, as highlighted in dark blue. An arc from $F_i^+$ to $F_j$ would induce a $\protect\hyperlink{def:a4}{\protect\ora{A_4}}$, represented in dark green.}
    \label{fig:neighborhoodpartition}
\end{figure}
\stmt{If $D$ is $\hyperlink{def:a4}{\ora{A_4}}$-free, there are no arcs from $F_i^+$ to $F_j$, and no arcs from $L_i$ to $L_j^-$. \label{dag:a4}}
By symmetry it is enough to show that if $D$ is $\hyperlink{def:a4}{\ora{A_4}}$-free then there is no arc from $F_i^+$ to $F_j$. 
Suppose for some $v \in F_i^+$ and $w \in F_j$ that $(v, w) \in E(D)$.
Then $p_{i-1} \to p_i \leftarrow v \rightarrow w$ is an induced $\hyperlink{def:a4}{\ora{A_4}}$ in $D$ (see the dark green arcs in Figure~\ref{fig:neighborhoodpartition}). This proves (\ref{dag:a4}).
\end{proof}

In the Subsection~\ref{sub:firstQ4} we use Observation~\ref{obs:dagNeighborhood} to bound the dichromatic number of $N(P) \setminus N[K]$ in the $\hyperlink{def:q4}{\ora{Q_4}}$-free case.
In the $\hyperlink{def:p4}{\ora{P_4}}$-free case and the $\hyperlink{def:a4}{\ora{A_4}}$-free case we need to perform a more careful analysis of $N(P) \setminus N[K]$ in order to bound its dichromatic number because the conditions guaranteed by Observation~\ref{obs:dagNeighborhood} are weaker in these two cases.
In Subsection~\ref{sub:firstWpWa}, we use Observation~\ref{obs:dagNeighborhood} to bound the dichromatic number of the following subsets of $N(P) \setminus N[K]$
\begin{equation} \label{def:wp}
        \textcolor{bordeaux}{W^p} = \left(F_2^- \cup F_3^- \cup \dots \cup F_{\ell-1}^- \right) \cup \left( L_2^+ \cup L_3^+ \cup \dots \cup L_{\ell-1}^+ \right) \\
    \end{equation}
when $D$ is $\hyperlink{def:p4}{\ora{P_4}}$-free and 
    \begin{equation}\label{def:wa}
        \textcolor{bordeaux}{W^a} = \left( F_2^+ \cup F_3^+ \cup \dots \cup F_{\ell-1}^+ \right) \cup \left( L_2^- \cup L_3^- \cup \dots \cup L_{\ell-1}^- \right) \\
    \end{equation}
when $D$ is $\hyperlink{def:a4}{\ora{A_4}}$-free. The vertices in $N(P) \setminus (N[K] \cup W^p)$ and $N(P) \setminus (N[K] \cup W^a)$ have restrictions on how they may have neighbors in $V(P)$.
We will use this to bound their dichromatic number in Subsections~\ref{sub:firstRp} and \ref{sub:firstRa}, respectively.

\subsection{The $\protect\hyperlink{def:q4}{\protect\ora{Q_4}}$-free case}\label{sub:firstQ4}

In section, we bound the dichromatic number of
a \pathminimizingclosedtournament  in $D$ when $D$ is a $\hyperlink{def:q4}{\ora{Q_4}}$-free oriented graph satisfying the conditions of Scenario~\ref{inductivehypothesis}.

\begin{lemma}\label{lem:Q4:firstNeighborsDichi}
Let $P$ be a \hyperlink{def:forwardinduced}{forward-induced} directed path in $D$ and $\gamma$ be an integer satisfying $\dichi(N(v))\leq \gamma$ for each $v \in V(P)$. 
Let the vertices of $P$ be $p_1 \to p_2 \to \dots \to p_\ell$, in order.
Then, if $D$ is $\hyperlink{def:q4}{\ora{Q_4}}$-free, $\dichi(N(P) \setminus N(\{p_1,p_{\ell}\})) \leq \gamma$.
\end{lemma}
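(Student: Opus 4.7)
The plan is to use Observation~\ref{obs:dagNeighborhood} to show that the arcs of $D$ restricted to $N(P) \setminus N(\{p_1, p_\ell\})$ have a very rigid "layered" structure, which will let us dicolor each layer independently with the same palette of $\gamma$ colors.

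First I would observe that a vertex $v \in N(P) \setminus N(\{p_1, p_\ell\})$ is adjacent to some $p_i$ with $2 \leq i \leq \ell - 1$ but not to $p_1$ or $p_\ell$. In particular, its first neighbor on $P$ lies in $\{p_2, \dots, p_{\ell-1}\}$, so using the \hyperref[def:FL]{partition of $N(P)$ by first attachment}, we have
\[
N(P) \setminus N(\{p_1, p_\ell\}) \;\subseteq\; F_2 \cup F_3 \cup \cdots \cup F_{\ell-1}.
\]
By Observation~\ref{obs:dagNeighborhood} applied in the $\hyperlink{def:q4}{\ora{Q_4}}$-free case, for any $2 \leq i < j \leq \ell - 1$ there are no arcs from $F_j$ to $F_i$. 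Hence every arc of $D$ between two distinct classes among $F_2, \dots, F_{\ell-1}$ goes from a lower-indexed class to a higher-indexed one.

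Next I would exploit this to conclude that any directed cycle contained in $F_2 \cup \cdots \cup F_{\ell-1}$ must lie entirely inside a single class $F_i$: any cycle that used an arc going from $F_i$ to $F_j$ with $i < j$ would eventually need to return from a higher-indexed class to a lower-indexed one, which is forbidden. It therefore suffices to dicolor each $F_i$ individually. Since $F_i \subseteq N(p_i)$, we have $\dichi(F_i) \leq \dichi(N(p_i)) \leq \gamma$, so each $F_i$ admits a dicoloring with $\gamma$ colors.

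Finally, I would reuse the \emph{same} palette of $\gamma$ colors across all the $F_i$'s to obtain a coloring of $F_2 \cup \cdots \cup F_{\ell-1}$. By the previous paragraph, any monochromatic directed cycle would be contained in some single $F_i$, contradicting the fact that the restriction of the coloring to $F_i$ is a proper dicoloring. Therefore $\dichi(N(P) \setminus N(\{p_1, p_\ell\})) \leq \gamma$, as required. There is no serious obstacle here: the whole argument reduces to combining the one-directional arc structure from Observation~\ref{obs:dagNeighborhood} with the trivial bound $\dichi(N(p_i)) \leq \gamma$.
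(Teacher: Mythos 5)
Your proposal is correct and follows essentially the same route as the paper: it uses the partition of $N(P)$ by first attachment, invokes the $\hyperlink{def:q4}{\ora{Q_4}}$-free case of Observation~\ref{obs:dagNeighborhood} to conclude that every directed cycle in $F_2 \cup \dots \cup F_{\ell-1}$ lies within a single $F_i$, and then reuses one palette of $\gamma$ colors across all classes since $F_i \subseteq N(p_i)$. No differences worth noting.
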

\begin{proof}
Assume $D$ is $\hyperlink{def:q4}{\ora{Q_4}}$-free oriented graph.
Let $(F_1, F_2, \dots, F_\ell)$ be the \hyperref[def:FL]{partition of $N(P)$ by first attachment} on $P$.
By definition $N(P) \setminus N(\{p_1,p_{\ell}\}) \subseteq F_2 \cup F_3 \cup \dots \cup F_{\ell-1}$.
By Observation~\ref{obs:dagNeighborhood} every directed cycle in $D[F_2 \cup F_3 \cup \dots \cup F_{\ell-1}]$ is completely contained in $D[F_i]$ for some $i \in [2,\ell-1]$.
By definition $F_i \subseteq N(p_i)$ so $\dichi(F_i) \leq \gamma$ for each $i \in \{1, 2, \dots, \ell \}$.
Hence we may use the same set of $\gamma$ colors for each of $F_2, F_3, \dots , F_{\ell-1}$.
Thus, $\dichi(N(P) \setminus N(\{p_1, p_\ell\}) \leq \gamma$.
\end{proof}

Lemma~\ref{lem:Q4:firstNeighborsDichi} allows us to demonstrate a bound on our \hyperref[def:niceset]{dipolar} set $N[C \cup X]$ as follows:

\begin{lemma}
\label{lem:q4:nicesetbound}
Let $D$ be a strongly connected $\hyperlink{def:q4}{\ora{Q_4}}$-free oriented graph.
Let $\gamma$ be an integer satisfying $\dichi(N(v))\leq \gamma$ for each $v \in V(P)$.
Then $D$ has a \hyperref[def:niceset]{dipolar} set with dichromatic number at most $(\omega(D) + 3)\cdot \gamma + 2$
\end{lemma}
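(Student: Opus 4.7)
The plan is to apply the machinery already set up: select a maximum tournament $K$ together with a directed path $P$ forming a \hyperref[def:closedtournament]{path-minimizing closed tournament} $C = K \cup V(P)$ in $D$, let $X$ be the \hyperlink{def:strongnbd}{strong neighborhood} of $C$, and take $N[C \cup X]$ as the \hyperref[def:niceset]{dipolar} set. Lemma~\ref{lem:buildinganiceset} ensures this is indeed a \hyperref[def:niceset]{dipolar} set in any $H$-free oriented graph when $H$ is an orientation of $P_4$, which in particular covers $H = \hyperlink{def:q4}{\ora{Q_4}}$.

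The main task is to bound $\dichi(N[C \cup X])$. By Corollary~\ref{corr:Y-dichi}, it suffices to bound $\dichi(N(P) \setminus N[K])$ by $\gamma$, because the corollary then yields
\[
\dichi(N[C \cup X]) \leq \dichi(N(P) \setminus N[K]) + (\omega(D)+2)\cdot \gamma + 2.
\]
I would then combine this with the $\hyperlink{def:q4}{\ora{Q_4}}$-specific Lemma~\ref{lem:Q4:firstNeighborsDichi}, which bounds the dichromatic number of the internal first-neighborhood of a forward-induced path.

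The bridge between these two ingredients is the observation that in a \hyperref[def:closedtournament]{path-minimizing closed tournament}, the path $P = p_1 \to \cdots \to p_\ell$ has its endpoints in $K$: by Definition~\ref{def:closedtournament}, $P$ runs from a source component to a sink component of $D[K]$, so $p_1, p_\ell \in V(K)$. Consequently $N(\{p_1, p_\ell\}) \subseteq N[K]$, hence
\[
N(P) \setminus N[K] \subseteq N(P) \setminus N(\{p_1, p_\ell\}).
\]
Since $P$ is \hyperlink{def:forwardinduced}{forward-induced} (again by \pathminimizingclosedtournament), Lemma~\ref{lem:Q4:firstNeighborsDichi} applies and gives $\dichi(N(P) \setminus N(\{p_1, p_\ell\})) \leq \gamma$, so $\dichi(N(P) \setminus N[K]) \leq \gamma$ as well. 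Plugging into the displayed inequality above yields the claimed bound of $(\omega(D)+3)\cdot \gamma + 2$.

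The only non-routine point, which I would state explicitly before the calculation, is the inclusion $p_1, p_\ell \in V(K)$ coming from the definition of closed tournament; everything else is an assembly of results already proved (Lemma~\ref{lem:buildinganiceset}, Corollary~\ref{corr:Y-dichi}, and Lemma~\ref{lem:Q4:firstNeighborsDichi}). I do not anticipate a serious obstacle here: the $\hyperlink{def:q4}{\ora{Q_4}}$-specific work has already been absorbed into Lemma~\ref{lem:Q4:firstNeighborsDichi}, so the present lemma is essentially a bookkeeping step that closes the $\hyperlink{def:q4}{\ora{Q_4}}$-free case.
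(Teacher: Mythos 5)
Your proposal matches the paper's proof essentially verbatim: choose a path-minimizing closed tournament $C = K \cup V(P)$, take $N[C\cup X]$ as the dipolar set via Lemma~\ref{lem:buildinganiceset}, apply Corollary~\ref{corr:Y-dichi}, and use $p_1,p_\ell \in K$ together with Lemma~\ref{lem:Q4:firstNeighborsDichi} to bound $\dichi(N(P)\setminus N[K])$ by $\gamma$. The argument is correct and you have, if anything, made the endpoint-in-$K$ bridge more explicit than the paper does.
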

\begin{proof}
Let $K$ be a maximum tournament and $P$ be a directed path in $D$ such that $K$ and $P$ form a \hyperref[def:closedtournament]{path-minimizing closed tournament} $C$ in $D$.
Let $X$ denote the \hyperlink{def:strongnbd}{strong neighborhood} of $C$.
By Lemma~\ref{lem:buildinganiceset}, $N[C \cup X]$ is a \hyperref[def:niceset]{dipolar} set.
By Corollary~\ref{corr:Y-dichi} we obtain:
$$\dichi(N[C \cup X]) \leq \dichi(N(P) \setminus N[K]) + (\omega(D) + 2)\cdot \gamma + 2.$$
Let $p_1, p_\ell$ denote the ends of $P$.
Then by definition $p_1, p_\ell \in K$.
Hence the result follows by Lemma~\ref{lem:Q4:firstNeighborsDichi}.
\end{proof}

\subsection{The $\protect\ora{P_4}$-free case and the $\protect\ora{A_4}$-free case}
In this subsection, we bound the dichromatic number our \hyperref[def:niceset]{dipolar} set in the case where $D$ is $\hyperlink{def:p4}{\ora{P_4}}$-free or $\hyperlink{def:a4}{\ora{A_4}}$-free.

\subsubsection{Bounding $\protect\dichi(W^p)$ and $\protect\dichi(W^a)$}
\label{sub:firstWpWa}
In this subsection, we bound the dichromatic number of of \hyperref[def:wp]{$W^p$} and \hyperref[def:wa]{$W^a$} using Observation~\ref{obs:dagNeighborhood} in $\hyperlink{def:p4}{\ora{P_4}}$-free and $\hyperlink{def:a4}{\ora{A_4}}$-free oriented graphs, respectively. 

\begin{lemma}\label{lem:Wdichi}
Let $P$ be a \forwardinduced directed path in $D$ and $\gamma$ be an integer satisfying $\dichi(N(v))\leq \gamma$ for each $v \in V(P)$.
Let \hyperref[def:wp]{$W^p$} and \hyperref[def:wa]{$W^a$} be defined with respect to $P$.
Then, the following statements both hold:
\begin{itemize}
    \item If $D$ is $\hyperlink{def:p4}{\ora{P_4}}$-free, then $\dichi(W^p) \leq 2\gamma$.
    \item If $D$ is $\hyperlink{def:a4}{\ora{A_4}}$-free, then $\dichi(W^a) \leq 2\gamma$.
\end{itemize}
\end{lemma}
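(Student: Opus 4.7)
The plan is to exploit Observation~\ref{obs:dagNeighborhood} to show that each of the two \emph{halves} composing $W^p$ (and symmetrically $W^a$) is individually $\gamma$-dicolorable, and then combine these two dicolorings using two disjoint palettes.

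Concretely, for the $\hyperlink{def:p4}{\ora{P_4}}$-free case, let $F^- \coloneqq F_2^- \cup F_3^- \cup \cdots \cup F_{\ell-1}^-$ and $L^+ \coloneqq L_2^+ \cup L_3^+ \cup \cdots \cup L_{\ell-1}^+$, so that $W^p = F^- \cup L^+$. By Observation~\ref{obs:dagNeighborhood}, for every $2 \leq i < j \leq \ell-1$ there is no arc from $F_i^-$ to $F_j$, and in particular none from $F_i^-$ to $F_j^-$. Hence every directed cycle in the induced subdigraph $D[F^-]$ must live inside a single class $F_i^-$. Since $F_i^- \subseteq N(p_i)$ and $\dichi(N(p_i))\leq \gamma$, each $F_i^-$ is $\gamma$-dicolorable, and because the classes are mutually arc-disconnected in the relevant direction we can reuse the \emph{same} palette of $\gamma$ colors across all indices $i$, yielding a dicoloring of $F^-$ with at most $\gamma$ colors. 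The same argument applied to the second clause of Observation~\ref{obs:dagNeighborhood} (no arcs from $L_i$ to $L_j^+$) shows that $\dichi(L^+) \leq \gamma$.

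To finish, I take two disjoint palettes $D_1, D_2$ of $\gamma$ colors each, dicolor $F^-$ using $D_1$, dicolor $L^+$ using $D_2$, and for any vertex lying in $F^- \cap L^+$ simply keep its $D_1$-color (discarding the $D_2$-color). Any monochromatic directed cycle in the resulting coloring of $W^p$ is entirely colored from one palette: if from $D_1$, all its vertices lie in $F^-$, contradicting the $F^-$-dicoloring; if from $D_2$, all its vertices lie in $L^+ \setminus F^-$, and removing vertices cannot create a cycle in the originally acyclic color class of the $L^+$-dicoloring, again a contradiction. This gives $\dichi(W^p) \leq 2\gamma$.

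The $\hyperlink{def:a4}{\ora{A_4}}$-free case is completely symmetric: the third bullet of Observation~\ref{obs:dagNeighborhood} forbids arcs from $F_i^+$ to $F_j$ and from $L_i$ to $L_j^-$, so the analogous decomposition $W^a = (\bigcup F_i^+) \cup (\bigcup L_i^-)$ yields two pieces that are each $\gamma$-dicolorable, and the two-palette merging argument above applies verbatim. There is no real obstacle here; the only thing to double-check is that the forbidden-arc conclusions of Observation~\ref{obs:dagNeighborhood} are stated in the exact direction we need so that a monochromatic cycle really is trapped inside a single $F_i^{\pm}$ or $L_i^{\pm}$, which is precisely what the observation provides.
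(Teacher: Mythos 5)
Your proposal is correct and follows essentially the same route as the paper: both use Observation~\ref{obs:dagNeighborhood} to trap every directed cycle of $\bigcup_i F_i^-$ (resp.\ $\bigcup_i L_i^+$, and the $+/-$ analogues for $\hyperlink{def:a4}{\ora{A_4}}$) inside a single class, reuse one palette of $\gamma$ colors across the classes, and then combine the two halves of $W^p$ (resp.\ $W^a$) with two disjoint palettes. The only difference is that you spell out the merging of the two palettes on the overlap explicitly, which the paper leaves implicit.
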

\begin{proof}
Let the vertices of $P$ be $p_1 \to p_2 \to \ldots \to p_\ell$, in order.
Let $(F_1, F_2, \dots, F_\ell)$ and $(L_1, L_2,$ $\dots, L_\ell)$ be the \hyperref[def:FL]{partitions of $N(P)$ by first attachment and last attachment} on $P$, respectively., respectively.

We begin by proving the first bullet.
Suppose $D$ is $\hyperlink{def:p4}{\ora{P_4}}$-free.
Then by Observation~\ref{obs:dagNeighborhood}, every directed cycle in $D[F_2^- \cup F_3^- \cup \dots \cup F_{\ell-1}^-]$ is completely contained in $D[F_i^-]$ for some $i \in [2, \ell-1]$. 
By assumption, $\dichi(N(p_i)) \leq \gamma$ for every $p_i \in P$.
Hence, we may use the same set of $\gamma$ colors for each of $F_2^-, F_3^-, \dots , F_{\ell-1}^-$.
So $\dichi(F_2^- \cup F_3^- \cup \dots \cup F_{\ell-1}^-) \leq \gamma$.
By symmetry, $\dichi(L_2^+ \cup L_2^+ \cup \dots \cup L_{\ell-1}^+) \leq \gamma$.
Therefore, since $\hyperref[def:wp]{W^p}$ is the union of these two sets, we obtain $\dichi(\hyperref[def:wp]{W^p}) \leq 2\gamma$.

The case is symmetric when $D$ is $\hyperlink{def:a4}{\ora{A_4}}$-free.
The third item of Observation~\ref{obs:dagNeighborhood} allows us to use the same set of colors for each of $F_2^+, F_3^+ , \dots , F_{\ell-1}^+$, and the same set of colors for each  of $L_2^- , L_3^-, \dots,  L_{\ell-1}^-$.
Hence, $\dichi(F_2^+ \cup F_3^+ \cup \dots \cup F_{\ell-1}^+) \leq \gamma$ and $\dichi(L_2^- \cup L_3^- \cup \dots \cup  L_{\ell-1}^-) \leq \gamma$.
Since $\hyperref[def:wa]{W^a}$ is the union of these two sets, $\dichi(\hyperref[def:wa]{W^a}) \leq 2\gamma$.
\end{proof}

\subsubsection{Completing the bound on the dichromatic number of our dipolar set in the $\protect\ora{P_4}$-free case}\label{sub:firstRp}
In this section, we will consider the dichromatic number of the following set of vertices.
\begin{definition} %
\label{def:rp}
Let $P$ be a \hyperlink{def:forwardinduced}{forward-induced} directed path in an oriented graph.
Let the vertices of $P$ be $p_1 \to p_2 \to \dots \to p_\ell$, in order.
We let $$\textcolor{bordeaux}{R^p} = N(P) \setminus (N(\{p_1,p_2,p_\ell\}) \cup W^p ).$$
where \hyperref[def:wp]{$W^p$} is defined with respect to $P$.
\end{definition}

We will assume that $D$ satisfies the conditions of Scenario~\ref{inductivehypothesis} with $H = \ora{P_4}$ for the remainder of Subsubsection~\ref{sub:firstRp}. In other words, $D$ is a strongly connected $\ora{P_4}$-free oriented graph with clique number $\omega$, and there is some finite $\gamma$ such that every $\hyperlink{def:p4}{\ora{P_4}}$-free oriented graph with clique number less than $\omega$ has dichromatic number at most $\gamma$.
Let $K$ be a maximum tournament and $P$ be a directed path in $D$ such that $K$ and $P$ form a  \hyperref[def:closedtournament]{path-minimizing closed tournament} $C$ in $D$.
Then, $N(P) \setminus N[K] \subseteq \hyperref[def:wp]{W^p} \cup \hyperref[def:rp]{R^p} \cup N(p_2)$ and we can bound $\dichi(\hyperref[def:wp]{W^p})$ in terms of $\omega(D)$ and $\gamma$.
Since $\omega(D) = \omega$, it follows that $\omega(N(v)) < \omega$ for each $v \in V(D)$. Hence, $\dichi(N(p_2)) \leq \gamma$.
Thus, by Lemma~\ref{lem:buildinganiceset} and Corollary~\ref{corr:Y-dichi}, we only need to bound $\dichi(\hyperref[def:rp]{R^p})$ in terms of $\omega$ and $\gamma$ in order to demonstrate that $D$ as a \hyperref[def:niceset]{dipolar} set of bounded dichromatic number.

By definition, \hyperref[def:wp]{$W^p$} is the set of vertices in $N(P) \setminus N(\{p_1, p_\ell\})$ whose first neighbor on $P$ is an in-neighbor or whose last neighbor in $V(P)$ is an out-neighbor.
Hence, $R^p$ consists exactly of the vertices in $N(P) \setminus N(\{p_1,p_2,p_{\ell}\})$ whose first neighbor in $V(P)$ is an out-neighbor and whose last neighbor in $V(P)$ is an in-neighbor..

We will show that since $C=K \cup V(P)$ is a  \hyperref[def:closedtournament]{\emph{path-minimizing} closed tournament} there is no tournament of order $\omega$ in $R^p$ and thus $\dichi(R^p) \leq \gamma$.
In particular, we will show that for a contradiction, if $R^p$ has a tournament $J$ of order $\omega$, then we can find a directed path $P'$ that is \emph{shorter than $P$} such that $J$ and $P'$ form a closed tournament.
In order to prove this, we will need the following lemma, which will allow us to exhibit a relatively short path between two adjacent vertices in $R^p$.
\begin{lemma}\label{lem:dp:paths}
Let $D$ be a $P_4$-free oriented graph.
Let $P=p_1 \to p_2 \to \ldots \to p_\ell$ be a \forwardinduced directed path in $D$.
Let \hyperref[def:rp]{$R^p$} be defined with respect to $P$.
 Let $v, w \in R^p$, if $(w,v) \in E(D)$, there is a directed path from $v$ to $w$ on at most $\max\{6, \ell-1\}$ vertices.
\end{lemma}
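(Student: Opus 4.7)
The plan is to introduce four indices: $p_i, p_j$ the first and last neighbors of $v$ on $P$ (so $v \to p_i$ and $p_j \to v$), and $p_a, p_b$ those of $w$ (so $w \to p_a$ and $p_b \to w$). By the definition of $R^p$ all four indices lie in $\{3, \ldots, \ell - 1\}$, and since $D$ is oriented we also have $i < j$ and $a < b$ (otherwise a digon arises between $v$ and $p_i = p_j$, or between $w$ and $p_a = p_b$). The argument then splits on whether $i \leq b$.

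If $i \leq b$, the desired path is simply $v \to p_i \to p_{i+1} \to \cdots \to p_b \to w$, of length $b - i + 3 \leq \ell - 1$. In the harder case $i > b$, I will examine the induced subgraph on $\{p_b, w, v, p_i\}$. Its guaranteed arcs are $p_b \to w \to v \to p_i$, while $p_b$ is non-adjacent to $v$ (as $b < i$) and $p_i$ is non-adjacent to $w$ (as $i > b$). Thus for the quadruple not to induce an $\ora{P_4}$, the vertices $p_b$ and $p_i$ must be adjacent. Since $P$ is forward-induced, this can happen only via the path arc (forcing $i = b + 1$) or via a backward chord $(p_i, p_b) \in E(D)$; in the latter case $v \to p_i \to p_b \to w$ is a directed path on $4$ vertices and we are done.

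The main obstacle is the remaining subcase $i = b + 1$, in which no out-neighbor of $v$ on $P$ reaches $p_b$ by walking forward along $P$. To handle it, I will apply $\ora{P_4}$-freeness once more, this time to the length-$4$ subpath $p_{b-1} \to p_b \to p_{b+1} \to p_{b+2}$ of $P$; this subpath exists since $a \geq 3$ gives $b - 1 \geq 2$ and $i \leq \ell - 1$ gives $b + 2 \leq \ell$. Since $P$ admits no forward chord, preventing an induced $\ora{P_4}$ on these four vertices forces at least one of the backward chords $(p_{b+1}, p_{b-1})$, $(p_{b+2}, p_b)$, or $(p_{b+2}, p_{b-1})$ to belong to $E(D)$. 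Each of these yields a short directed path from $v$ to $w$: respectively, $v \to p_{b+1} \to p_{b-1} \to p_b \to w$ on $5$ vertices; $v \to p_{b+1} \to p_{b+2} \to p_b \to w$ on $5$ vertices; or $v \to p_{b+1} \to p_{b+2} \to p_{b-1} \to p_b \to w$ on $6$ vertices. Combining all cases yields a directed path from $v$ to $w$ with at most $\max\{\ell - 1, 6\}$ vertices.
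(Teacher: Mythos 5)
Your proof is correct and follows essentially the same route as the paper's: reduce to the case where the last neighbor $p_b$ of $w$ precedes the first neighbor $p_i$ of $v$ (otherwise walk along $P$, giving at most $\ell-1$ vertices), apply $\ora{P_4}$-freeness to $p_b \to w \to v \to p_i$ to force an arc between $p_b$ and $p_i$, handle the backward-chord case directly, and in the remaining case $i=b+1$ apply $\ora{P_4}$-freeness to the subpath $p_{b-1}\to p_b\to p_{b+1}\to p_{b+2}$ to extract one of the same three backward chords and the same short detours on at most $6$ vertices. No gaps; the index bounds you cite ($3\le i,a$ and $b,i\le\ell-1$) justify every step exactly as in the paper.
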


\begin{figure}[t!]
    \centering
    \includegraphics[scale=0.17]{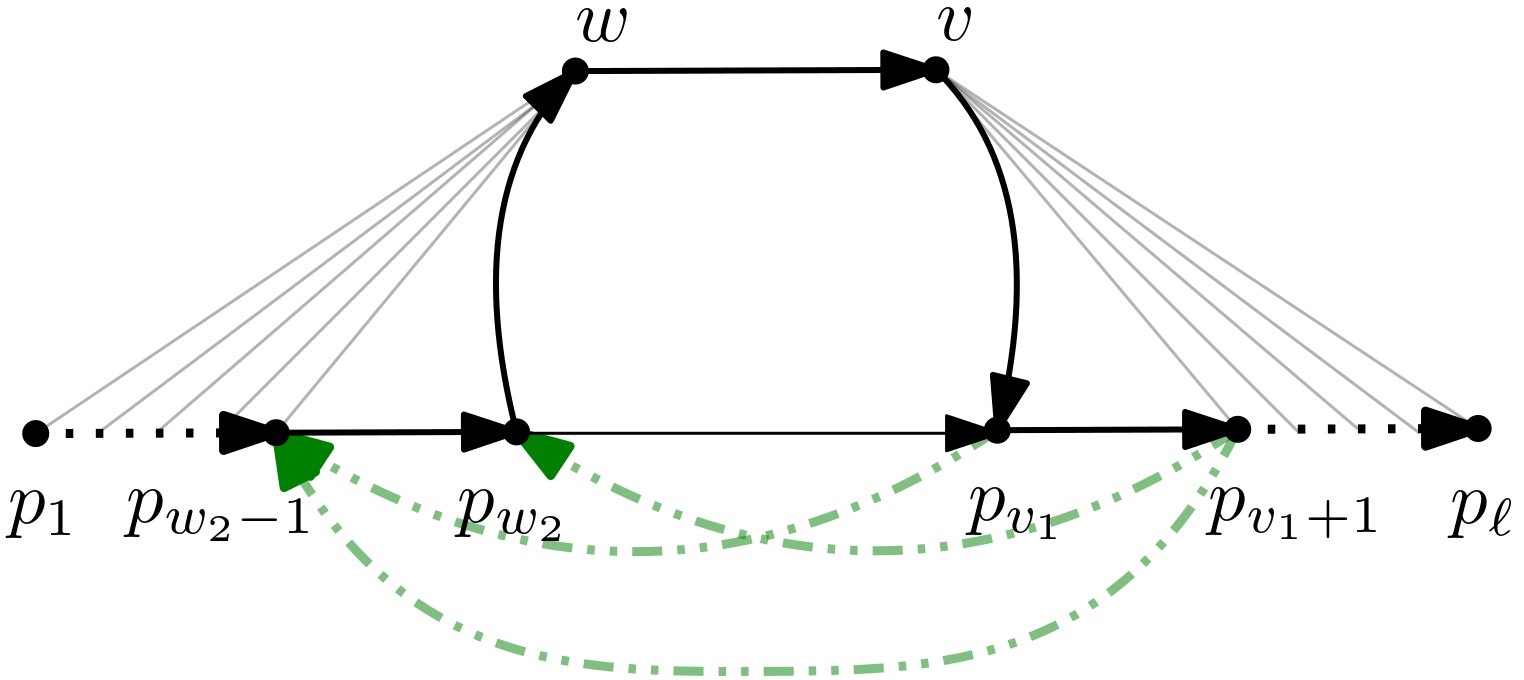}
    \caption{On the bottom, a shortest path $P$ in $D$, and an arc $(w,v)$ between neighbors of $P$ in $R^p$. Illustrated here is the case where the last neighbor $p_{w_2}$ of $w$ on $P$ appears just before the first neighbor $p_{v_1}$ of $v$, all possible arcs are shown in gray. Then, path $p_{w_2-1},p_{w_2},p_{v_1},p_{v_1 + 1}$ cannot induce a $\protect\hyperlink{def:p4}{\protect\ora{P_4}}$. Any arc possibly preventing this, shown in dash-dotted green, yields a path from $v$ to $w$ of length at most five.}
    \label{fig:shortcutP4}
\end{figure}

\begin{proof}
Let $p_{v_1}$ denote the first neighbor of $v$ in $V(P)$ and let $p_{w_2}$ denote the last neighbor of $w$ in $V(P)$.
Then, since $v,w \not \in \cup_{i=1}^\ell \hyperref[def:FL]{F_i^-} \cup \hyperref[def:FL]{L_i^+}$, the corresponding arcs are $(v, p_{v_1}),(p_{w_2}, w) \in E(D)$.
By definition of $R^p$, we obtain that $3 \leq v_1, w_2 \leq \ell -1$.
Hence, we may assume that $w_2 < v_1$, for otherwise $v \to p_{v_1} \to p_{v_1+1} \to \dots \to p_{w_2} \to w$ is a directed path from $v$ to $w$ with at most $\ell -1$ vertices, as desired.

Since $w_2 < v_1$ the vertices $v,w$ have no common neighbors in $V(P)$. 
Now, consider the directed path $p_{w_2} \to w \to v \to p_{v_1}$. Since $D$ is $\hyperlink{def:p4}{\ora{P_4}}$-free it cannot be induced.
Thus, $(p_{w_2}, p_{v_1}) \in E(D)$ or $(p_{v_1}, p_{w_2}) \in E(D)$.

Suppose that $(p_{v_1}, p_{w_2}) \in E(D)$. 
Then, $v \to p_{v_1} \to p_{w_2} \to w$ is a directed path from $v$ to $w$ of length three, as desired.
Hence we may assume that $(p_{w_2}, p_{v_1}) \in E(D)$.

Since $P$ is a \forwardinduced directed path  and $w_2 < v_1$, it follows that $v_1 = w_2 +1$.
Consider the directed path $p_{w_2-1} \to p_{w_2} \to p_{v_1} \to p_{v_1 + 1}$. Since $D$ is $\hyperlink{def:p4}{\ora{P_4}}$-free it cannot be induced.
Therefore, the vertices $p_{w_2-1}$ and $p_{v_1}$, the vertices $p_{w_2}$ and $p_{v_1+1}$, or the vertices $p_{w_2-1}$ and $p_{v_1+1}$ are adjacent.
Furthermore, since $P$ is a shortest path, this means that at least one of $(p_{v_1+1}, p_{w_2-1}), (p_{v_1+1}, p_{w_2}), (p_{v_1}, p_{w_2-1})$ is an arc of $D$, see Figure~\ref{fig:shortcutP4}. We consider each case separately:
\begin{itemize}
\item Suppose $(p_{v_1}, p_{w_2-1})$ is an arc of $D$. Then $v \to p_{v_1} \to p_{w_2-1} \to p_{w_2} \to w$ is a path of $D$.
\item Suppose $(p_{v_1+1}, p_{w_2})$ is an arc of $D$. Then $v \to p_{v_1} \to p_{v_1 + 1} \to p_{w_2} \to w$ is a path of $D$.
\item Suppose $(p_{v_1+1}, p_{w_2 -1})$ is an arc of $D$. Then $v \to p_{v_1} \to p_{v_1 + 1} \to p_{w_2-1} \to p_{w_2} \to w$ is a path of $D$.
\end{itemize}
In every case, the oriented graph induced by $\{v, p_{v_1} , p_{v_1 + 1}, p_{w_2-1}, p_{w_2}, w \}$ contains a directed path from $v$ to $w$ on at most six vertices.
Since one of the cases must hold, this completes the proof.
\end{proof}

With the last lemma in hand, we are ready to bound the dichromatic number of $R^p$.

\begin{lemma}\label{lem:RP-dichi}
Suppose $D$ satisfies the conditions of Scenario~\ref{inductivehypothesis} with $H = \ora{P_4}$.
Then $D$ contains a \hyperref[def:niceset]{dipolar} set of dichromatic number at most $(\omega + 6) \cdot \gamma + 2$.
\end{lemma}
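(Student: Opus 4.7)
The plan is to reduce the proof to showing $\dichi(R^p) \leq \gamma$. By Lemma~\ref{lem:buildinganiceset}, $N[C \cup X]$ is a dipolar set; a case check on the first and last attachments of a vertex in $N(P) \setminus N[K]$ yields the decomposition $N(P) \setminus N[K] \subseteq W^p \cup R^p \cup (N(p_2) \setminus N[K])$. Combined with $\dichi(W^p) \leq 2\gamma$ from Lemma~\ref{lem:Wdichi}, the trivial bound $\dichi(N(p_2)) \leq \gamma$ (which holds because $\omega(D[N(p_2)]) \leq \omega - 1$), and Corollary~\ref{corr:Y-dichi}, the bound $\dichi(R^p) \leq \gamma$ immediately yields
\[
\dichi(N[C \cup X]) \leq 4\gamma + (\omega + 2)\gamma + 2 = (\omega + 6)\gamma + 2.
\]

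To prove $\dichi(R^p) \leq \gamma$, I aim to show $\omega(D[R^p]) \leq \omega - 1$, so that Scenario~\ref{inductivehypothesis} finishes the job. Assume for contradiction that $R^p$ contains a tournament $J$ of order $\omega$. I will produce a closed tournament on $J$ whose closing path $P'$ satisfies $|V(P')| < |V(P)| = \ell$, contradicting the path-minimality of $(K, P)$. Pick $v$ in a source strong component and $w$ in a sink strong component of $J$. If $J$ is strongly connected, take $P'$ to be a single vertex of $J$ (giving $|V(P')| = 1$); otherwise $v, w$ lie in distinct strong components of the tournament $J$, so the arc $(v, w) \in E(J)$ exists. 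Apply Lemma~\ref{lem:dp:paths} to this arc, with $v$ and $w$ playing the roles of the lemma's $w$ and $v$ respectively, to obtain a directed path $P'$ from $w$ to $v$ in $D$ on at most $\max\{6, \ell - 1\}$ vertices.

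It remains to verify $|V(P')| < \ell$. For $\ell \leq 4$ the set $R^p$ is empty: each $u \in R^p$ needs first and last attachment indices $f(u), l(u) \in \{3, \dots, \ell - 1\}$ that are distinct (because the first must be an out-neighbor and the last an in-neighbor of $u$, and $D$ has no digons), which forces $\ell \geq 5$. For $\ell \geq 7$, already $\max\{6, \ell - 1\} = \ell - 1 < \ell$. The delicate case is $\ell \in \{5, 6\}$: here the constraint $3 \leq f(u) < l(u) \leq \ell - 1$ on every $u \in R^p$ gives $f(w) \leq \ell - 2 \leq 4 \leq l(v)$, placing the chosen pair into Case~1 of the proof of Lemma~\ref{lem:dp:paths}. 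That case produces a path on $l(v) - f(w) + 3 \leq (\ell - 1) - 3 + 3 = \ell - 1$ vertices. Hence $|V(P')| \leq \ell - 1 < \ell$ uniformly, giving the desired contradiction; so $\omega(D[R^p]) \leq \omega - 1$ and $\dichi(R^p) \leq \gamma$ by the inductive hypothesis.

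The main obstacle I anticipate is exactly the small-$\ell$ regime: the worst-case bound $\max\{6, \ell - 1\}$ from Lemma~\ref{lem:dp:paths} is not on its own sharp enough to beat $\ell$ when $\ell \in \{5, 6\}$. The saving structural constraint is $f(u) < l(u)$ on vertices of $R^p$, which uniformly forces Case~1 of that lemma and hence a closing path strictly shorter than $P$.
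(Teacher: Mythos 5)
Your proof is correct and follows essentially the same route as the paper: the same dipolar set $N[C\cup X]$, the same decomposition $N(P)\setminus N[K]\subseteq W^p\cup R^p\cup N(p_2)$ with Lemma~\ref{lem:Wdichi} and Corollary~\ref{corr:Y-dichi}, and the same contradiction with path-minimality (via Lemma~\ref{lem:dp:paths} applied to an arc from the source to the sink component of a hypothetical $\omega$-tournament in $R^p$) to get $\dichi(R^p)\le\gamma$. The only divergence is the short-path regime $\ell\le 6$: the paper sidesteps $R^p$ there entirely, bounding $\dichi(N(P)\setminus N[K])\le 4\gamma$ directly by Observation~\ref{obs:neighbourhoodtrivi} on the at most four interior vertices of $P$, whereas you push the $R^p$ contradiction through $\ell\in\{5,6\}$ by reusing the first case of the proof of Lemma~\ref{lem:dp:paths} (the attachment constraints force $f(w)\le l(v)$, giving a closing path on at most $\ell-1$ vertices); your finer analysis is valid, the paper's treatment is simply the more economical one.
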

\begin{proof}
Let $K$ be a maximum tournament and $P$ be a directed path in $D$ such that $K$ and $P$ \hyperref[def:closedtournament]{form a path-minimizing closed tournament} $C$ in $D$.
Then by Lemma~\ref{lem:buildinganiceset}, $N[C \cup X]$ is a \hyperref[def:niceset]{dipolar} set.
We will use Lemma~\ref{lem:dp:paths} to bound $\dichi(R^p)$.
Then we will combine this bound with the results from the previous sections to bound $\dichi(N[C \cup X])$.

\stmt{If $P$ contains at least 7 vertices, then $\dichi(R^p) \leq \gamma$. \label{rp:bound}}
Suppose $\ell \geq 7$.
If $\omega(D[R^p]) < \omega(D)$ then by assumption $\dichi(R^p) \leq \gamma$, so we may assume that there is an $\omega(D)$-tournament $J \subseteq R$.
Since $C$ is a minimum closed tournament and $P$ is non-empty, $J$ is not strongly connected.
Hence there must be exactly one strongly connected component of $J$ that is a sink and exactly one strongly connected component of $J$ that is a source (and they are not equal). 
Let $v$ be a vertex in the sink component of $J$ and $w$ be a vertex in the source component of $J$. Therefore, $(w,v) \in E(D)$.
Thus by Lemma~\ref{lem:dp:paths} there is a path $Q$ from $v$ to $w$ of length less than that that of $P$.
Hence, $J, P'$ form a closed tournament.
By definition since $K, P$ were chosen to form a \hyperref[def:closedtournament]{path-minimizing closed tournament} $P'$ cannot be shorter than $P$, a contradiction.
This proves (\ref{rp:bound}).
\stmt{$\dichi(N(P) \setminus N[K]) \leq 4 \gamma$ \label{dp:pathbound}}
Let the vertices of $P$ be $p_1 \to p_2 \to \ldots \to p_\ell$, in order.
By definition, $p_1, p_\ell \in K$.
If $\ell \leq 6$, then by Observation~\ref{obs:neighbourhoodtrivi}, $N(P) \setminus N[K] \leq 4\gamma$, as desired. Hence, we may assume this is not the case.

Let \hyperref[def:wp]{$W^p$}, \hyperref[def:rp]{$R^p$} be defined with respect to $P$.
Then, $$\dichi(N(P) \setminus N[K]) \leq \dichi(W^p) + \dichi (R^p) + \dichi(N(p_2)).$$
Hence, by Lemma~\ref{lem:Wdichi}, Observation~\ref{obs:neighbourhoodtrivi} and (\ref{rp:bound}) we obtain
$\dichi(N(P) \setminus N[K] \leq 4 \gamma$.
This proves (\ref{dp:pathbound}).
\\
\\
By combining (\ref{dp:pathbound}) with Corollary~\ref{corr:Y-dichi}, we have \[\dichi(N[C \cup X]) \leq \dichi(N(P) \setminus N[K]) + (\omega + 2) \cdot \gamma + 2 \leq (\omega + 6) \cdot \gamma + 2. \qedhere\]
\end{proof}

\subsubsection{Completing the bound on $\protect\dichi$ of our {dipolar} set in the $\protect\ora{A_4}$-free case}\label{sub:firstRa}
In this section, we prove a bound on the remaining vertices of $N(P) \setminus N[K]$ and use the results of the previous sections to show that $D$ contains a \hyperref[def:niceset]{dipolar} set of bounded dichromatic number in the $\hyperlink{def:a4}{\ora{A_4}}$-free case.
\begin{definition}\label{def:ra}
Let $P$ be a shortest path with vertices $p_1 \to p_2 \to \dots \to p_\ell$, in order.
Then $$\hyperref[def:ra]{R^a} = N(P) \setminus (N(\{p_1, p_\ell\}) \cup W^p).$$

Recall \hyperref[def:wa]{$W^a$} $= \cup_{i=2}^{\ell - 1} F_i^+ \cup L_i^-$, that is, vertices in $N(P) \setminus N(\{p_1,p_{\ell}\})$ whose first neighbor on $P$ is an out-neighbor or whose last neighbor in $V(P)$ is an in-neighbor.
Hence, $\hyperref[def:ra]{R^a} $ consists exactly of the vertices in $N(P) \setminus N(\{p_1,p_{\ell}\})$ whose first neighbor in $V(P)$ is an in-neighbor and whose last neighbor in $V(P)$ is an out-neighbor.
\end{definition}

We bound $\dichi(\hyperref[def:ra]{R^a})$ using a similar technique to the one we used to bound the dichromatic number of \hyperref[def:wa]{$W^a$}.
Recall that we need to bound the dichromatic number of the union of the sets $F_i \backslash W^a$. 
To this end, we prove that there are no arcs between these sets with indices differing by more than three.
We first make the following observation, holding for any shortest directed path. 
\begin{observation}\label{obs:shortcuts}
Let $D$ be an $\hyperlink{def:a4}{\ora{A_4}}$-free oriented graph.
Let $P=p_1 \to p_2 \to \ldots \to p_\ell$ be a shortest directed path from $p_1$ to $p_\ell$ in $D$.
Let $i,j \in \{1,2, \dots, \ell \}$ with $j > i+3$.
Suppose $v, w \in N(P)$ such that $(p_i, v), (w, p_j) \in E(D)$.
Then $(v, w) \not \in E(D)$.
\end{observation}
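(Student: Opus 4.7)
The plan is to derive a contradiction directly from the minimality of $P$, bypassing the $\ora{A_4}$-free hypothesis entirely. Suppose toward a contradiction that $(v,w) \in E(D)$, and consider the candidate directed walk
\[
  Q \colon p_1 \to p_2 \to \cdots \to p_i \to v \to w \to p_j \to p_{j+1} \to \cdots \to p_\ell.
\]
First I would verify that $Q$ is a directed path in $D$. Each consecutive pair forms an arc: the segments $p_1 \to \cdots \to p_i$ and $p_j \to \cdots \to p_\ell$ use arcs of $P$, and the three ``shortcut'' arcs $p_i \to v$, $v \to w$, $w \to p_j$ hold by hypothesis. For distinctness, the $p_k$ are pairwise distinct since they lie on $P$; $v \ne w$ since $(v,w)$ is an arc; and $v, w \notin V(P)$ by the convention $N(P) = \bigcup_{p \in V(P)} N(p) \setminus V(P)$.

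The path $Q$ then has exactly $(i-1) + 3 + (\ell - j) = \ell + i - j + 2$ arcs. Since $j > i+3$, i.e.\ $j - i \geq 4$, this number is at most $\ell - 2$, strictly less than the $\ell - 1$ arcs of $P$. Thus $Q$ is a directed path from $p_1$ to $p_\ell$ strictly shorter than $P$, contradicting the hypothesis that $P$ is a shortest directed path from $p_1$ to $p_\ell$.

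The only step that requires any care is confirming the pairwise distinctness of the vertices of $Q$, but once the standard convention on $N(P)$ is recalled, it is immediate. Notably, the $\ora{A_4}$-free hypothesis plays no role in the argument; it is presumably carried along because the observation is invoked inside the $\ora{A_4}$-free analysis of Subsection~\ref{sub:firstRa}, but the shortcut argument works for any shortest directed path in any oriented graph.
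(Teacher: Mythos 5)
Your proof is correct and takes essentially the same route as the paper: replace the segment $p_i \to \cdots \to p_j$ of $P$ by the shortcut $p_i \to v \to w \to p_j$ and contradict the minimality of $P$, using that $j > i+3$ makes the resulting path strictly shorter. Your observation that the $\ora{A_4}$-free hypothesis is never used also matches the paper, whose proof likewise relies only on $P$ being a shortest directed path.
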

\begin{proof}
If $(v, w) \in E(D)$ then we may replace the path $p_i \to p_{i+1} \to p_{i+2} \to p_{i+3} \to \dots \to p_j$ with the path $p_i \to v \to w \to p_j$ in $P$ to obtain a shorter directed path from $p_1$ to $p_\ell$, a contradiction.
\end{proof}

The previous observation allows us to prove that some arcs between vertices in $\hyperref[def:ra]{R^a}$ are forbidden.

\begin{observation}\label{obs:RA:dag}
Let $D$ be an $\hyperlink{def:a4}{\ora{A_4}}$-free oriented graph. 
Let $P=p_1 \to p_2 \to \ldots \to p_\ell$ be a shortest directed path in an oriented graph $D$.  Let $(F_1, F_2, \dots, F_\ell)$ be the \hyperref[def:FL]{partition of $N(P)$ by first attachment}.
Then, for any two integers in $i,j \in \{2,3, \dots, \ell -1 \}$ satisfying $i +2 < j$ there is no arc from a vertex in $F_i\setminus W^a$ to a vertex in $F_j \setminus W^a$.
\end{observation}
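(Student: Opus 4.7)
The plan is to argue by contradiction. Suppose such an arc $(v, w) \in E(D)$ exists with $v \in F_i \setminus W^a$, $w \in F_j \setminus W^a$, $i, j \in \{2, \ldots, \ell-1\}$, and $j > i + 2$. The overall strategy is to extract two arcs incident to $P$---one for each of $v$ and $w$---so that together with $(v, w)$ they yield a strictly shorter directed $p_1$-$p_\ell$ path via Observation~\ref{obs:shortcuts}, contradicting minimality of $P$.

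I would proceed in three steps. First, I unpack the $W^a$-exclusion: since $v \notin F_i^+$ one has $v \in F_i^-$, giving the arc $(p_i, v) \in E(D)$, and analogously $(p_j, w) \in E(D)$. Second, I pin down the last neighbor $p_{j'}$ of $w$ on $P$. The case $j' = j$ is impossible, because $(p_j, w) \in E(D)$ would place $w$ into $L_j^- \subseteq W^a$, contradicting $w \notin W^a$; hence $j' > j \geq i + 3$, so in particular $j' > i + 3$. Moreover, $w \notin L_{j'}^-$ (inherited from $w \notin W^a$) forces $w \in L_{j'}^+$, yielding the arc $(w, p_{j'}) \in E(D)$. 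Third, I apply Observation~\ref{obs:shortcuts} to the arcs $(p_i, v)$ and $(w, p_{j'})$ with $j' > i + 3$: the detour $p_i \to v \to w \to p_{j'}$ provides a strictly shorter $p_1$-$p_\ell$ path, a contradiction.

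The main potential obstacle is the corner case $j' = \ell$: by definition, $W^a$ only excludes $L_k^-$ for $k \in \{2, \ldots, \ell-1\}$, so a priori one could have $(p_\ell, w) \in E(D)$ without the $W^a$-exclusion triggering. In the intended applications of this observation, vertices of interest lie in $N(P) \setminus N[K]$ and are hence non-adjacent to $p_\ell \in K$, which rules out $j' = \ell$. Should one wish to dispatch this case head-on, the $\ora{A_4}$-free hypothesis suffices: on the four vertices $\{v, w, p_j, p_{j+1}\}$ the arcs $v \to w$, $p_j \to w$, $p_j \to p_{j+1}$ read, along the underlying $P_4$ on $v{-}w{-}p_j{-}p_{j+1}$, as $\to \leftarrow \to$, which is precisely $\ora{A_4}$---a contradiction unless some further adjacency among the pairs $\{v, p_j\}, \{v, p_{j+1}\}, \{w, p_{j+1}\}$ exists, and any such additional arc can then be fed back into Observation~\ref{obs:shortcuts} as a new long-range input to close the argument.
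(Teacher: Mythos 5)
Your core argument is exactly the paper's proof: from $v\notin F_i^+$ and $w\notin F_j^+$ you extract $(p_i,v),(p_j,w)\in E(D)$, you rule out the last attachment of $w$ being $p_j$ itself, deduce from the $W^a$-exclusion an out-arc $(w,p_{j'})$ with $j'>j\geq i+3$, and conclude with Observation~\ref{obs:shortcuts}. Your flagging of the corner case $j'=\ell$ is a genuine catch rather than a side remark: since $W^a$ only contains $L_k^-$ for $k\in\{2,\dots,\ell-1\}$, the hypothesis $w\notin W^a$ says nothing about the orientation of the arc between $w$ and $p_\ell$, and the paper's own proof silently asserts that the last neighbour of $w$ is an out-neighbour $p_x$ with $x\in[j+1,\ell-1]$, which is exactly the point at issue. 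Your first repair is the right one: in every application the relevant vertices lie in $R^a\subseteq N(P)\setminus N[K]$ with $p_1,p_\ell\in K$, so they are non-adjacent to $p_\ell$, the last attachment has index in $[2,\ell-1]$, and your main argument applies verbatim (equivalently, one can add the hypothesis $v,w\notin N(p_\ell)$ to the statement).

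Your second, ``head-on'' patch is not complete, however. The quadruple $\{v,w,p_j,p_{j+1}\}$ does give an induced $\ora{A_4}$ unless one of the extra adjacencies you list is present, but the claim that any such extra arc ``can be fed back into Observation~\ref{obs:shortcuts}'' fails when the arc is directed from the path into $v$ or $w$: Observation~\ref{obs:shortcuts} needs an arc from an early path vertex into one endpoint and an arc from the other endpoint into a late path vertex, so an arc such as $p_{j+1}\to w$ contributes nothing, and $p_j\to v$ or $p_{j+1}\to v$ only yields, after reapplying the $W^a$-exclusion to the last attachment of $v$, either an immediate short cut or the same unresolved situation with $p_\ell\to v$. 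Iterating your $\ora{A_4}$ argument up the path does not close it either, because in this corner case $w$ is adjacent to $p_\ell$, which destroys inducedness of the final quadruple $\{v,w,p_{\ell-1},p_\ell\}$. So the statement for vertices adjacent to $p_\ell$ remains unproven in your write-up (as it is in the paper); the proof is complete once the statement is read, as intended, for vertices with no neighbour among $\{p_1,p_\ell\}$.
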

\begin{proof}
Let $i,j$ be integers $i,j \in \{2,3, \dots, \ell -1 \}$ satisfying $i +2 < j$. 
Suppose $v \in F_i\setminus W^a$ and $w \in F_j \setminus W^a$.
By definition of \hyperref[def:wa]{$W^a$}, for every $r \in N(P) \setminus W^a$, the first neighbor of $r$ in $V(P)$ is an in-neighbor of $r$ and the last neighbor of $r \in N(P)$ is an out-neighbor of $r$.
Hence, there is some $x \in [j+1,\ell-1]$ such that $p_x$ is an out-neighbor of $w$.
Thus, $x > i+3$ and by Observation~\ref{obs:shortcuts}, $(v,w) \not \in E(D)$, see Figure~\ref{fig:shortcutA4}.
\end{proof}

\begin{figure}
    \centering
    \includegraphics[scale=0.15]{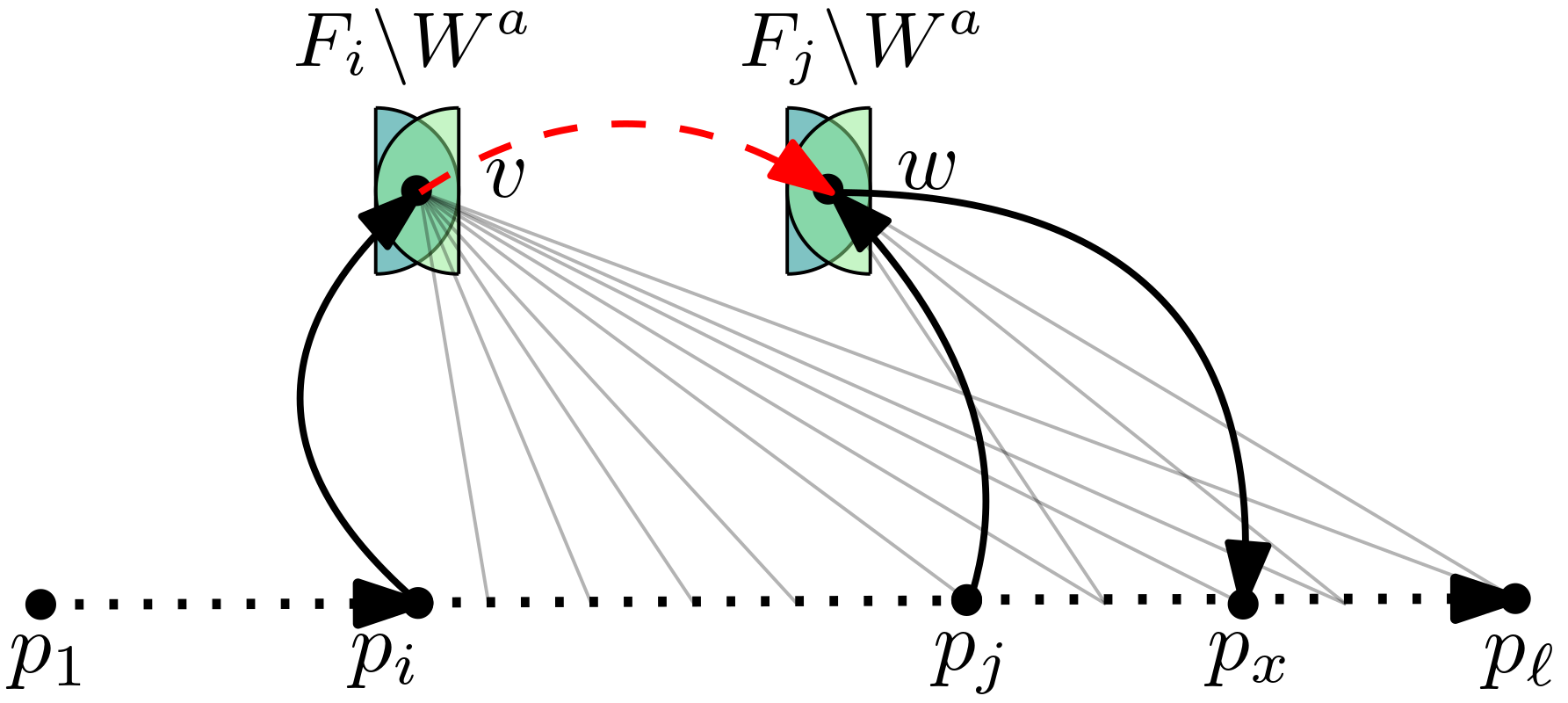}
    \caption{
    A shortest directed path $P$, along with $v \in F_i \setminus W^a$ and $w \in F_j \setminus W^a$ of $\hyperref[def:ra]{R^a}$. Since vertex $w \notin W^a$, it must also belong to some $L_x^+$ with $x > j$, meaning its last neighbor on $P$ is an out-neighbor. Then, since $x>j>i+2$, an arc $(v,w)$ would yield a shorter path from $p_1$ to $p_{\ell}$.}
    \label{fig:shortcutA4}
\end{figure}

With the last observation in hand, we are ready to bound $\dichi(\hyperref[def:ra]{R^a})$, which we do through a similar argument to the case of \hyperref[def:wa]{$W^a$}. The main difference is that here, we use $3$ disjoint pallets of $\gamma$ colors each and choose a color palette for $F_i \backslash W^a$ according to the index of $i$ modulo $3$ (where $i \in \{2, 3, \dots, \ell-1\}$). 
\begin{lemma}\label{lem:a4:nbdP}
Let $D$ be an $\hyperlink{def:a4}{\ora{A_4}}$-free oriented graph, $P$ be a shortest directed path in $D$ between its endpoints, and \hyperref[def:ra]{$R^a$} be defined with respect to $P$.
Let $\gamma$ be an integer satisfying $\dichi(N(v))\leq \gamma$ for each $v \in V(P)$. 
Then $\dichi(\hyperref[def:ra]{R^a})) \leq 3\gamma$.
\end{lemma}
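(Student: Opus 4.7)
The plan is to combine Observation~\ref{obs:RA:dag} with the hypothesis $\dichi(N(p_i)) \leq \gamma$ by partitioning the index range $\{2, \ldots, \ell-1\}$ into its three residue classes modulo $3$ and assigning each class a disjoint palette of $\gamma$ colors. First I would observe that $R^a \subseteq \bigcup_{i=2}^{\ell-1}(F_i \setminus W^a)$: any $v \in R^a$ lies in $N(P)$ and is non-adjacent to both $p_1$ and $p_\ell$, so its first neighbor on $P$ is some $p_i$ with $2 \leq i \leq \ell-1$, placing $v \in F_i$; and $v \notin W^a$ directly from the definition of $R^a$. Moreover, since $F_i \setminus W^a \subseteq N(p_i)$, the hypothesis yields $\dichi(F_i \setminus W^a) \leq \gamma$ for every such $i$.

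Next I would fix three disjoint palettes $\Pi_0, \Pi_1, \Pi_2$ of $\gamma$ colors each, and for each $i \in \{2,\ldots,\ell-1\}$ pick a dicoloring $c_i$ of $F_i \setminus W^a$ that uses only the palette $\Pi_{i \bmod 3}$. Concatenating these colorings yields a coloring of $R^a$ using at most $3\gamma$ colors; the remaining task is to check that it is a dicoloring. Suppose, toward a contradiction, that some monochromatic directed cycle $Q$ exists in $D[R^a]$. Since colors from different palettes are distinct, every vertex of $Q$ lies in $F_i \setminus W^a$ for indices $i$ in a single residue class modulo $3$. If $Q$ is contained in one $F_i \setminus W^a$ it is monochromatic under $c_i$, contradicting the choice of $c_i$. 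Otherwise, let $i$ be the smallest index for which $Q$ meets $F_i \setminus W^a$; then $Q$ must leave $F_i \setminus W^a$ by some arc going into $F_j \setminus W^a$ with $j > i$, and since $i, j$ are in the same residue class mod $3$ we have $j \geq i + 3 > i + 2$, contradicting Observation~\ref{obs:RA:dag}.

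The subtle point — and really the only obstacle — is that Observation~\ref{obs:RA:dag} only forbids arcs in the forward direction along $P$ (from $F_i$ to $F_j$ with $j > i+2$), not in the backward direction, so it does not directly assert that the subgraph induced on one palette is acyclic. The argument above sidesteps this asymmetry by choosing the least index $i$ appearing on $Q$: because $Q$ is a closed walk, it must at some point move from this least set to a set of strictly larger index, and any such step provides the forward arc needed to invoke Observation~\ref{obs:RA:dag}. Putting these pieces together gives $\dichi(R^a) \leq 3\gamma$, as claimed.
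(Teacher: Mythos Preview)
Your proposal is correct and follows essentially the same approach as the paper: partition the indices $\{2,\ldots,\ell-1\}$ into residue classes modulo $3$, assign each class a disjoint palette of $\gamma$ colors, and use Observation~\ref{obs:RA:dag} to rule out monochromatic directed cycles. In fact, your verification that no monochromatic cycle arises---via the least-index argument to handle the asymmetry of Observation~\ref{obs:RA:dag}---is more explicit than the paper's, which simply asserts the conclusion directly from the observation.
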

\begin{proof}
Let $P=p_1 \to p_2 \to \ldots \to p_\ell$ be a shortest directed path from $p_1$ to $p_\ell$ in $D$.  Let $(F_1, F_2, \dots, F_\ell)$ be the \hyperref[def:FL]{partition of $N(P)$ by first attachment}.
By defintion $\dichi(F_i) \leq \gamma$ for each $i \in \{1,2, \dots, \ell\}$.
We fix three disjoint sets $S_0, S_1, S_2$ of $\gamma$ colors and dicolor each $F_j \setminus W^a$ for $j \in [2, \ell -1]$ with set $S_i$ where ${i = j \mod 3}$.
By Observation~\ref{obs:RA:dag}, $D[F_2 \cup F_3 \cup \dots \cup F_{\ell -1} \setminus W^a]$ does not contain any monochromatic directed cycle.
Hence, $\dichi(F_2 \cup F_3 \cup \dots \cup F_{\ell-1} \setminus W^p) \leq 3\gamma$, as desired.
\end{proof}

We combine the previous observation with the results of the previous sections to show that $\hyperlink{def:a4}{\ora{A_4}}$-free oriented graphs have a \hyperref[def:niceset]{dipolar} set of bounded dichromatic number.

\begin{lemma}
\label{lem:a4:nicesetbound}
Suppose $D$ is an oriented graph satisfying Scenario~\ref{inductivehypothesis} with $H = \hyperlink{def:a4}{\ora{A_4}}$.
Then $D$ contains a \hyperref[def:niceset]{dipolar} set of dichromatic number at most $(\omega + 7) \cdot \gamma + 2$.
\end{lemma}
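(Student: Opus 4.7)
The plan is to combine the building blocks assembled in the previous subsections in exactly the same way as Lemma~\ref{lem:RP-dichi} does for the $\ora{P_4}$-free case, only substituting in the $\ora{A_4}$-versions of the technical lemmas. I would start by invoking Scenario~\ref{inductivehypothesis}: since $D$ is strongly connected, it admits a tournament $K$ of maximum order together with a directed path $P$ from a source component to a sink component of $K$, chosen so that $|P|$ is minimized. Let $C = K \cup V(P)$ be this path-minimizing closed tournament and let $X$ be its strong neighborhood. Lemma~\ref{lem:buildinganiceset} then guarantees that $S := N[C \cup X]$ is a dipolar set, so the task reduces to bounding $\dichi(S)$.

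Next I would appeal to Corollary~\ref{corr:Y-dichi}, which already provides the estimate
\[
\dichi(N[C \cup X]) \leq \dichi(N(P) \setminus N[K]) + (\omega + 2)\cdot \gamma + 2,
\]
so it remains to prove $\dichi(N(P) \setminus N[K]) \leq 5\gamma$. Since the endpoints $p_1, p_\ell$ of $P$ lie in $K$, we have $N(\{p_1, p_\ell\}) \subseteq N[K]$, and hence $N(P) \setminus N[K] \subseteq W^a \cup R^a$ by the definitions of these two sets. Thus
\[
\dichi(N(P) \setminus N[K]) \leq \dichi(W^a) + \dichi(R^a).
\]

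At this point Lemma~\ref{lem:Wdichi} gives $\dichi(W^a) \leq 2\gamma$, and Lemma~\ref{lem:a4:nbdP} gives $\dichi(R^a) \leq 3\gamma$. Plugging these in yields $\dichi(N(P) \setminus N[K]) \leq 5\gamma$, and then the corollary above delivers the desired bound $\dichi(S) \leq 5\gamma + (\omega + 2)\gamma + 2 = (\omega + 7)\gamma + 2$.

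Unlike the $\ora{P_4}$-free argument of Lemma~\ref{lem:RP-dichi}, no case distinction on the length of $P$ is needed here, because Lemma~\ref{lem:a4:nbdP} bounds $\dichi(R^a)$ uniformly via a three-pallet argument that uses only Observation~\ref{obs:RA:dag} (the shortest-path shortcut coming from the out-neighbor $p_x$ forced by $w \notin W^a$). Consequently the main obstacle is not in this proof at all: all the real content was absorbed into Observations~\ref{obs:shortcuts} and~\ref{obs:RA:dag} and Lemma~\ref{lem:a4:nbdP}. The only place where I would double-check the bookkeeping is the implicit use of path-minimality (as opposed to merely forward-induced), which is what ensures Observation~\ref{obs:shortcuts} and is therefore what lets Observation~\ref{obs:RA:dag} rule out backward arcs across $R^a$ — exactly the feature that allows a uniform three-color bound on $R^a$ without any length hypothesis on $P$.
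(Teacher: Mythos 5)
Your proposal is correct and follows essentially the same route as the paper's own proof: take a path-minimizing closed tournament $C = K \cup V(P)$ with strong neighborhood $X$, apply Lemma~\ref{lem:buildinganiceset} and Corollary~\ref{corr:Y-dichi}, note $N(P) \setminus N[K] \subseteq W^a \cup R^a$ since $p_1, p_\ell \in K$, and combine Lemmas~\ref{lem:Wdichi} and~\ref{lem:a4:nbdP} to get $\dichi(N(P)\setminus N[K]) \leq 5\gamma$ and hence the bound $(\omega+7)\cdot\gamma+2$. Your closing remark about path-minimality (not mere forward-inducedness) being what licenses Observation~\ref{obs:shortcuts} is also the right point to flag, and it holds because a shorter directed path between $p_1$ and $p_\ell$ would again join the source and sink components of $K$, contradicting the choice of $P$.
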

\begin{proof}
Let \hyperref[def:closedtournament]{$C = K \cup V(P)$} be a \pathminimizingclosedtournament  closed tournament in $D$.
Let $X$ denote the \hyperlink{def:strongnbd}{strong neighborhood} of $C$.
Then by Lemma~\ref{lem:buildinganiceset}, $N[C \cup X]$ is a \hyperref[def:niceset]{dipolar} set and by Corollary~\ref{corr:Y-dichi}, 
\begin{equation}
\label{koala}
\dichi(N[C \cup X]) \leq \dichi(N(P) \setminus N[K]) + (\omega + 2) \cdot \gamma + 2.
\end{equation}
Let \hyperref[def:wa]{$W^a$} and \hyperref[def:ra]{$R^a$} be defined with respect to $P$.
By definition $p_1, p_\ell \in K$ and so $N(P) \setminus N[K] \subseteq W^a \cup \hyperref[def:ra]{R^a}$.
Thus by combining Lemmas~\ref{lem:Wdichi} and \ref{lem:a4:nbdP}, we obtain 
\begin{equation}
\label{wallaby}
\dichi(N(P) \setminus N[K]) \leq 5 \gamma.
\end{equation}
Then the lemma follows by combining (\ref{koala}) and (\ref{wallaby}).
\end{proof}

\section{Orientations of $P_4$ are $\protect\dichi$-bounding}
\label{sec:gettingthebound}
In this section, we consider an oriented graph $D$ satisfying Scenario~\ref{inductivehypothesis}.
The previous sections show that $D$ has a dipolar set of bounded dichromatic number.
We will use this result and Lemma~\ref{lem:nice-set} to show that oriented graphs not containing some orientation of $P_4$ are $\dichi$-bounded.

\subsection{$D$ contains a dipolar set with bounded dichromatic number}
In the previous sections that if $D$ does not contain some $H \in \{\hyperlink{def:q4}{\ora{Q_4}}, \hyperlink{def:p4}{\ora{P_4}}, \hyperlink{def:a4}{\ora{A_4}} \}$ then $D$ has a \hyperref[def:niceset]{dipolar} set of bounded dichromatic number.
These results can be summarized in the following lemma.

\begin{lemma}
\label{lem:allnicesetbounds}
Let $\omega > 1$ be an integer.
Let $H \in \{\hyperlink{def:q4}{\ora{Q_4}}, \hyperlink{def:p4}{\ora{P_4}}, \hyperlink{def:a4}{\ora{A_4}} \}$.
Let $\gamma$ be the maximum value of $\dichi(D')$ for any $H$-free oriented graph $D'$ with $\omega(D') < \omega$.
Let $D$ be a strongly connected $H$-free oriented graph with clique number $\omega$.
Then, 
\begin{itemize}
    \item If $H = \hyperlink{def:q4}{\ora{Q_4}}$, then $D$ contains a \hyperref[def:niceset]{dipolar} set with dichromatic number at most $(\omega + 3)\cdot \gamma + 2$.
    \item If $H = \hyperlink{def:p4}{\ora{P_4}}$, then $D$ contains a \hyperref[def:niceset]{dipolar} set with dichromatic number at most $(\omega+ 6)\cdot \gamma +2$.
    \item If $H = \hyperlink{def:a4}{\ora{A_4}}$, then $D$ contains a \hyperref[def:niceset]{dipolar} set with dichromatic number at most $(\omega + 7) \cdot \gamma +2$.
\end{itemize}
\end{lemma}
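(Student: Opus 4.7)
The statement of Lemma~\ref{lem:allnicesetbounds} is a consolidation of three separate results established in the previous sections, one for each orientation of $P_4$ considered so far. Accordingly, the proof I would write is essentially a dispatch: given the hypotheses of Scenario~\ref{inductivehypothesis} (which are exactly the hypotheses stated in this lemma, now specialized to $H \in \{\ora{Q_4}, \ora{P_4}, \ora{A_4}\}$), we invoke the corresponding case-specific lemma that already furnishes the dipolar set with the claimed dichromatic bound.

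Concretely, the plan is as follows. First, I would observe that the assumptions of Scenario~\ref{inductivehypothesis} are satisfied: $D$ is a strongly connected $H$-free oriented graph with clique number $\omega > 1$, and the quantity $\gamma$ defined in the statement is exactly the parameter $\gamma$ appearing in the scenario. In particular, since every $v \in V(D)$ has $\omega(D[N(v)]) < \omega$, one gets $\dichi(N(v)) \leq \gamma$ by the inductive hypothesis, so all preliminary results used below apply. Next, for each $H$, I would construct a \pathminimizingclosedtournament{} $C = K \cup V(P)$ in $D$ (which exists because $D$ is strongly connected), let $X$ be the \hyperlink{def:strongnbd}{strong neighborhood} of $C$, and take $N[C \cup X]$ as our dipolar set; this is a dipolar set by Lemma~\ref{lem:buildinganiceset}.

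Finally, I would split into the three cases and cite the respective bound that has already been proven. For $H = \ora{Q_4}$, the bound $\dichi(N[C \cup X]) \leq (\omega + 3)\gamma + 2$ is exactly Lemma~\ref{lem:q4:nicesetbound}. For $H = \ora{P_4}$, the bound $(\omega + 6)\gamma + 2$ is Lemma~\ref{lem:RP-dichi}. For $H = \ora{A_4}$, the bound $(\omega + 7)\gamma + 2$ is Lemma~\ref{lem:a4:nicesetbound}. Each of these lemmas was proven by combining Corollary~\ref{corr:Y-dichi} (which handles $\dichi(N[K])$, $\dichi(V(P))$ and $\dichi(Y)$ uniformly) with the case-specific bound on $\dichi(N(P) \setminus N[K])$ obtained in Section~\ref{sec:firstnbd}.

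Since all the nontrivial work (constructing the dipolar set, bounding the second neighborhood via Lemma~\ref{lem:QRS}, and bounding $\dichi(N(P) \setminus N[K])$ via the case analyses in Subsections~\ref{sub:firstQ4}--\ref{sub:firstRa}) has already been carried out, I do not anticipate any genuine obstacle here; the only point to check carefully is that the parameters $\omega$ and $\gamma$ in the three cited lemmas match the ones introduced in the present statement, which is immediate. The proof will therefore be essentially a one-line appeal to each of the three lemmas, collected as three bullets mirroring the statement.
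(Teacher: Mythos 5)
Your proposal matches the paper's proof exactly: the paper proves this lemma by a one-line appeal to Lemmas~\ref{lem:q4:nicesetbound}, \ref{lem:RP-dichi} and \ref{lem:a4:nicesetbound} for the three respective cases. Your additional check that the hypotheses of Scenario~\ref{inductivehypothesis} (in particular $\dichi(N(v))\leq\gamma$ for all $v$) are met is correct and consistent with how those lemmas are applied.
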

\begin{proof}
The result for $H = \hyperlink{def:q4}{\ora{Q_4}}, \hyperlink{def:p4}{\ora{P_4}}, \hyperlink{def:a4}{\ora{A_4}}$ is given in Lemmas~\ref{lem:q4:nicesetbound}, \ref{lem:RP-dichi} and \ref{lem:a4:nicesetbound}, respectively.
\end{proof}

\subsection{Computing the $\protect\dichi$-binding function}

We will show that an element from the following family of functions is a $\dichi$-binding function for any class of oriented graphs forbidding a particular orientation of $P_4$.

\begin{definition}
\label{def:fc}
For any integer $c \geq 3$ we let 
$$\textcolor{bordeaux}{f_c}(x) = 2^{x}(x+c)! + \sum_{i=0}^{x} \frac{2^{i+2}(x+c)!}{(x+c-i)!}$$ for any non-negative integer $x$.
\end{definition}

We will need that $\hyperref[def:fc]{f_c}$ satisfies the following recursive properties in order to show that for some $c$ the function $\hyperref[def:fc]{f_c}$ is $\chi$-bounding for any class of oriented graph forbidding a particular orientation of $P_4$.

\begin{observation}\label{obs:reccurencerelation}
Let $c \geq 3$.
Then:
\begin{itemize}
    \item $\hyperref[def:fc]{f_c}(x) = 2(x+c)\hyperref[def:fc]{f_c}(x-1) + 4$ for any integer $x \geq 2$, and
    \item $\hyperref[def:fc]{f_c}(1) > 1$,
    \item $\hyperref[def:fc]{f_c}(x) \leq(x+c)^{x+c+1.5}$ %
      for any integer $x \geq 1$.
\end{itemize}
\end{observation}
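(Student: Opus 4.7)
The plan is to verify the three items in turn: the first by a direct index shift in the defining sum of $f_c$, the second by plugging in $x = 1$, and the third by induction on $x$ using the recurrence from the first item.

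For the recurrence, I would compute $2(x+c)f_c(x-1)$ by absorbing each factor $(x+c)$ into an appearance of $(x-1+c)!$ to produce $(x+c)!$. The leading summand $2^{x-1}(x-1+c)!$ becomes $2^x(x+c)!$, which is exactly the leading term of $f_c(x)$. In the sum, the factor $2(x+c)$ turns the $i$-th term into $\frac{2^{i+3}(x+c)!}{(x+c-(i+1))!}$, and the substitution $j = i+1$ yields
\[ 2(x+c)\sum_{i=0}^{x-1}\frac{2^{i+2}(x-1+c)!}{(x-1+c-i)!} \;=\; \sum_{j=1}^{x}\frac{2^{j+2}(x+c)!}{(x+c-j)!}. \]
This is exactly the sum in $f_c(x)$ except for the missing $j=0$ term, which equals $4$; that accounts for the constant $+4$ in the recurrence, establishing item one. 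Item two is immediate: $f_c(1) = 2(c+1)! + 4 + 8(c+1) > 1$ for every $c \geq 3$.

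For the third item I would induct on $x \geq 1$, writing $n := x+c$. The base case $x = 1$ amounts to $2(c+1)! + 4 + 8(c+1) \leq (c+1)^{c+2.5}$, which follows from Stirling's bound $(c+1)! \leq e\sqrt{c+1}\,((c+1)/e)^{c+1}$; at $c=3$ one may check directly that $f_3(1) = 84 \leq 4^{5.5} = 2048$. For the inductive step, the recurrence from item one together with the induction hypothesis gives
\[ f_c(x) \;\leq\; 2n(n-1)^{n+0.5} + 4 \;=\; 2n^{n+1.5}\left(1-\tfrac{1}{n}\right)^{n+0.5} + 4, \]
and using $(1-1/n)^n \leq 1/e$ the first summand is at most $(2/e)\,n^{n+1.5}$. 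The remaining slack $(1 - 2/e)\,n^{n+1.5}$ easily absorbs the additive $+4$, since $n \geq 4$ yields $n^{n+1.5} \geq 4^{5.5} > 2000$ and $1 - 2/e > 1/4$.

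The only subtle point is the interplay between the multiplicative factor $2$ in the recurrence and the exponent $n + 1.5$: the inequality $2(1-1/n)^n \leq 2/e < 1$ is precisely what makes the induction close, and the extra half-power $n^{0.5}$ above $n^{n+1}$ only has to dominate the harmless constant $+4$. Once this is noted, all three items fall out by direct, if slightly fiddly, calculation.
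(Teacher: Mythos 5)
Your proposal is correct. For the first two items you do essentially what the paper does: the recurrence comes from absorbing the factor $2(x+c)$ into the leading term $2^{x-1}(x-1+c)!$ and shifting the index in the sum, with the missing $j=0$ term contributing the $+4$, and $f_c(1)>1$ is immediate from the explicit value. For the third item your route genuinely differs. You run an induction on $x$ driven by the recurrence itself, closing the step via $2n(n-1)^{n+0.5}+4 = 2n^{n+1.5}(1-1/n)^{n+0.5}+4 \le (2/e)\,n^{n+1.5}+4 \le n^{n+1.5}$ for $n=x+c\ge 5$, and you only invoke Stirling (plus a small direct check) for the base case $x=1$. The paper instead argues non-inductively: since $c!>4$, every summand $\frac{2^{i+2}(x+c)!}{(x+c-i)!}$ is at most $2^{x}(x+c)!$, hence $f_c(x)<(x+2)2^{x}(x+c)!$, and a single application of Stirling's formula converts this into $(x+c)^{x+c+1.5}$. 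Both arguments are sound; yours has the aesthetic advantage of reusing the recurrence the observation establishes anyway and of isolating the inequality $2(1-1/n)^{n}\le 2/e<1$ that makes the bound self-propagating, while the paper's term-by-term estimate avoids a second induction and gives the bound for every $x$ in one shot.
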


\begin{proof}
By definition since $c \geq 3$ we obtain that  $\hyperref[def:fc]{f_c}(1) > 2 c! > 1$.
Hence, the second bullet holds and we will now prove the first bullet.
Let $x \geq 2$ be an integer.
Then,
$$\hyperref[def:fc]{f_c}(x) = 2(x+c)\left( 2^{x-1}(x+c-1)! + \frac{1}{2(x+c)}\sum_{i=0}^x \frac{2^{i+2}(x+c)!}{(x+c-i)!} \right).$$
By definition,
$$\sum_{i=0}^x \frac{2^{i+2}(x+c)!}{(x+c-i)!} = 2(x+c)\left(\sum_{i=1}^{x}\frac{2^{i+1}(x+c-1)!}{(x+c-i)!}\right) + 4 = 2(x+c)\left(\sum_{i=0}^{x-1}\frac{2^{i+2}(x+c-1)!}{(x+c-i-1)!}\right) + 4.$$
Thus, by combining the previous two equations we obtain that $\hyperref[def:fc]{f_c}(x) = 2(x+c)\hyperref[def:fc]{f_c}(x-1) + 4$.
This proves the first bullet.

We will complete the proof by showing the third bullet holds. Let $x \geq 1$.
By definition, $$\sum_{i=0}^{x} \frac{2^{i+2}(x+c)!}{(x+c-i)!} = 2^2 + 2^3(x+c) + 2^4(x+c)(x+c-1) + \dots + 2^{x+2}\frac{(x+c)!}{c!}.$$
Since $c! > 4$ every $i \in \{0, \dots, x\}$ satisfies $\frac{2^{i+2}(x+c)!}{(x+c-i)!} \leq 2^x(x+c)!$.
Hence, we obtain 
$$
\sum_{i=0}^{x} \frac{2^{i+2}(x+c)!}{(x+c-i)!} \leq 2^x (x+1)(x+c)!.
$$

\begin{equation}\label{eqn:fc:exp1}
    \hyperref[def:fc]{f_c}(x) < 2^x(x+c)!+2^x (x+1)(x+c)! \leq (x+2) 2^{x}(x+c)!.
\end{equation}%
We will use the following well-known equation called \emph{Stirling's Formula} to complete the proof.
\stmt{\label{eqn:stirling}
Every $n \geq 1$ satisfies
$$\left(n!<\sqrt{2\pi n}\left(\frac{n}{e}\right)^n e^{\frac{1}{12n}}\right). $$
}
Since $c \geq 3$ we obtain the following by combining (\ref{eqn:fc:exp1}) and (\ref{eqn:stirling}).
\begin{align*}
  (x+2) 2^{x}(x+c)!<&
  (x+2) 2^{x}\sqrt{2\pi (x+c)}\left(\frac{(x+c)}{e}\right)^{x+c} e^{\frac{1}{12(x+c)}} %
   <%
  (x+c)^{x+c+1.5}.
\end{align*}
This proves the third bullet.
\end{proof}

\subsection{$\protect\dichi$-boundedness}

We are now ready to prove the following more precise version of our main result, Theorem~\ref{thm:main}.
\begin{theorem}\label{thm:q4}%
Let $H$ be an orientation of $P_4$, then $H$-free graphs are $\dichi$-bounded.
Specifically,
\begin{itemize}
    \item If $D$ is $\hyperlink{def:q4}{\ora{Q_4}}$-free or $\ora{Q'_4}$-free, then $\dichi(D) \leq (\omega(D)+3)^{\omega(D)+4.5}$,
    \item If $D$ is $\hyperlink{def:p4}{\ora{P_4}}$-free, then $\dichi(D) \leq (\omega(D)+6)^{\omega(D)+7.5}$, and
    \item If $D$ is $\hyperlink{def:a4}{\ora{A_4}}$-free, then $\dichi(D) \leq (\omega(D)+7)^{\omega(D)+8.5}$.
\end{itemize}
\end{theorem}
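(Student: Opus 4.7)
The plan is to prove all three bounds of Theorem~\ref{thm:q4} by a single induction on $\omega(D)$, with $c_H = 3$ handling $H \in \{\ora{Q_4}, \ora{Q_4}'\}$, $c_H = 6$ handling $H = \ora{P_4}$, and $c_H = 7$ handling $H = \ora{A_4}$; these are exactly the constants appearing in the coefficient $(\omega + c_H)$ of Lemma~\ref{lem:allnicesetbounds}. The base case $\omega(D) = 1$ is immediate: such a $D$ has no arcs, so $\dichi(D) = 1 \leq f_{c_H}(1)$ by the second bullet of Observation~\ref{obs:reccurencerelation}.

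For the inductive step, fix $\omega \geq 2$ and assume every $H$-free oriented graph $D''$ with $\omega(D'') < \omega$ satisfies $\dichi(D'') \leq \gamma := f_{c_H}(\omega-1)$. The class $\mathcal{D}_\omega$ of $H$-free oriented graphs with clique number at most $\omega$ is closed under taking induced subgraphs, so Lemma~\ref{lem:nice-set} applies provided every $D' \in \mathcal{D}_\omega$ admits a dipolar set of dichromatic number at most $c := (\omega + c_H)\gamma + 2$. I will exhibit such sets: when $\omega(D') < \omega$, the set $V(D')$ is trivially dipolar (with $S^+ = V(D')$, $S^- = \emptyset$) and has dichromatic number at most $\gamma \leq c$ by the inductive hypothesis; when $\omega(D') = \omega$, Lemma~\ref{lem:allnicesetbounds} furnishes the required dipolar set for strongly connected $D'$, and the non-strongly-connected case follows from the observation that $\dichi$ equals the maximum dichromatic number over strong components, each of which is a strongly connected $H$-free oriented graph with clique number at most $\omega$ to which the strongly connected case applies. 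Lemma~\ref{lem:nice-set} then yields $\dichi(D) \leq 2c$ for every $D \in \mathcal{D}_\omega$.

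It remains to match the recurrence. Substituting $\gamma = f_{c_H}(\omega-1)$ gives $2c = 2(\omega + c_H)f_{c_H}(\omega-1) + 4$, which equals $f_{c_H}(\omega)$ by the first bullet of Observation~\ref{obs:reccurencerelation}, closing the induction with $\dichi(D) \leq f_{c_H}(\omega)$. The third bullet of the same observation then provides the closed-form bound $f_{c_H}(\omega) \leq (\omega + c_H)^{\omega + c_H + 1.5}$, producing the three explicit estimates by substituting $c_H \in \{3,6,7\}$ and observing that $\omega + c_H + 1.5$ equals $\omega + 4.5$, $\omega + 7.5$, and $\omega + 8.5$ respectively. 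The main obstacle is really just bookkeeping: the factor of $2$ coming from Lemma~\ref{lem:nice-set}, combined with the $(\omega + c_H)\gamma + 2$ structure of Lemma~\ref{lem:allnicesetbounds}, must precisely reproduce the recursion defining $f_{c_H}$, and this compatibility is engineered into Definition~\ref{def:fc} by construction; the substantive structural content has already been carried out in the earlier sections.
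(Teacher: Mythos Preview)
Your argument is correct and matches the paper's proof: set $c\in\{3,6,7\}$ according to $H$, induct on $\omega$ using Lemma~\ref{lem:allnicesetbounds} together with Lemma~\ref{lem:nice-set}, check that the resulting recursion $2(\omega+c)f_c(\omega-1)+4$ agrees with Observation~\ref{obs:reccurencerelation}, and finish with the closed-form bound from its third bullet. One small omission: Lemma~\ref{lem:allnicesetbounds} is stated only for $H\in\{\ora{Q_4},\ora{P_4},\ora{A_4}\}$, so for $\ora{Q_4}'$ you need the observation (which the paper makes explicit) that reversing every arc turns $\ora{Q_4}'$-free oriented graphs into $\ora{Q_4}$-free ones while preserving both $\dichi$ and $\omega$.
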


\begin{proof}
Let $H$ be an orientation of $P_4$
Note $\hyperlink{def:q4}{\ora{Q_4}}$ can be obtained from $\hyperlink{def:q4}{\ora{Q_4}}'$ by reversing the orientation of every edge.
Hence the theorem holds for $\hyperlink{def:q4}{\ora{Q_4}}$ if and only if it holds for $\ora{Q_4'}$.
Thus may assume $H \in \{\hyperlink{def:q4}{\ora{Q_4}}, \hyperlink{def:p4}{\ora{P_4}}, \hyperlink{def:a4}{\ora{A_4}} \}$.

We let $c = 3$ if $H = \hyperlink{def:q4}{\ora{Q_4}}$, $c = 6$ if $H = \hyperlink{def:p4}{\ora{P_4}}$ and $c = 7$ if $H = \hyperlink{def:a4}{\ora{A_4}}$.
Then by the third bullet of Observation~\ref{obs:reccurencerelation}, it is enough to show that the class of $H$-free oriented graphs is $\dichi$-bounded by $\hyperref[def:fc]{f_c}$.

We have $\hyperref[def:fc]{f_c}(1) > 1$ by the first bullet of Observation~\ref{obs:reccurencerelation}, so the statement holds for oriented graphs with no arcs.
We complete the proof by induction on the clique number.
Let $\omega > 1$ be an integer.
Suppose every $H$-free oriented graph $D'$ with clique number less than $\omega$ satisfies $\dichi(D') \leq f(\omega(D'))$.
Let $D$ be an $H$-free oriented graph with clique number equal to $\omega$.
We will show $\dichi(D) \leq \hyperref[def:fc]{f_c}(\omega)$.
We may assume by induction on the number of vertices that $D$ is strongly connected.

By Lemma~\ref{lem:allnicesetbounds}, $D$ has a \hyperref[def:niceset]{dipolar} set $S$ with  $\dichi(S) \leq (\omega + c)\cdot \hyperref[def:fc]{f_c}(\omega-1) + 2$.
Then by Lemma~\ref{lem:nice-set}, $$\dichi(D) \leq 2 \cdot \dichi(S) \leq 2(\omega + c)\cdot \hyperref[def:fc]{f_c}(\omega-1) + 4. $$
Since $\omega \geq 2$ this implies $\dichi(D) \leq \hyperref[def:fc]{f_c}(\omega)$ by the second bullet of Observation~\ref{obs:reccurencerelation}. 
This completes the proof.
\end{proof}

\section{Conclusion}
\label{sec:conclusion}

Our result is an initial step towards resolving the \hyperref[con:AlboukerGyarfasSumner4Digraphs2020]{ACN $\dichi$-boundedness conjecture} for orientation of paths in general. 
However, we think we are still far from this result.
Our construction of a \hyperref[def:niceset]{dipolar} set with bounded chromatic number relies heavily on the length of $P_4$ and we do not expect that our techniques can be directly extended to show that any oriented $P_t$ for $t \geq 5$ is $\dichi$-bounding.
It would already be interesting to hear the answer to the easier question: Does there exists an integer $t \geq 5$ and an orientation $H$ of $P_t$ such that the class of oriented graphs forbidding $H$ and all tournaments of size $3$ has unbounded dichromatic number?

Recall that the classes of $\hyperlink{def:q4}{\ora{Q_4}}$-free oriented graphs and $\hyperlink{def:q4}{\ora{Q_4}'}$-free oriented graphs were already shown to be $\chi$-bounded in \cite{chudnovsky2019orientations}.
The $\chi$-binding function $f'$ for these two classes from~\cite{chudnovsky2019orientations} is defined using recurrence \[f'(x)\coloneqq 2(3 f'(x-1))^5\] which leads to a double-exponential bound on $\chi$, and cannot guarantee a better bound on $\dichi$.
In this paper, Theorem~\ref{thm:main} provides an improved $\dichi$-binding function when any orientation of $P_4$ is forbidden. %
It would interest us to know of any improvements to the $\dichi$ function.
In particular, we would like to know whether any orientation of $P_4$ is \emph{polynomially} $\dichi$-bounding. In other words, is there some oriented $P_4$ so that the class of oriented graphs forbidding it has a \emph{polynomial} $\dichi$-binding function?

\section*{Acknowledgements}
Much of the work in this paper was done during the Sparse Graphs Coalition online workshop on directed minors and digraph structure theory held on March 4--8, 2022.
We would like to thank Pierre Aboulker for suggesting this problem to us at the workshop and for his help with the bibliography.
We also would like to thank Raphael Steiner for informing us of his prior related work \cite{steiner2021}.
Finally, we would like to thank the workshop participants and organizers, 
Jean-Sébastien Sereni, Raphael Steiner, and Sebastian Wiederrecht, for creating a friendly research environment.

\bibliography{bibliography_dichi.bib}

\end{document}